\title{    \LARGE   Dependent   measures in independent theories\\ 
}
\author{Karim Khanaki\thanks{Partially supported by IPM grant 1400030118}\\Arak University of Technology}
\newtheorem{Theorem}{Theorem}[section]
\newtheorem{Proposition}[Theorem]{Proposition}
\newtheorem{Definition}[Theorem]{Definition}
\newtheorem{Remark}[Theorem]{Remark}
\newtheorem{Lemma}[Theorem]{Lemma}
\newtheorem{Corollary}[Theorem]{Corollary}
\newtheorem{Fact}[Theorem]{Fact}
\newtheorem{Example}[Theorem]{Example}
\newtheorem{Question}[Theorem]{Question}
\def\dotminus{\mathbin{\ooalign{\hss\raise1ex\hbox{.}\hss\cr
  \mathsurround=0pt$-$}}}
\begin{document}
\maketitle

\begin{abstract}   We introduce the notion of {\em dependence},  as a property of a Keisler measure, and generalize some results of \cite{HPS} in $NIP$ theories  to arbitrary theories. 	Among other things, we show that this notion is  very natural and fundamental for several reasons: (i)~all measures in $NIP$ theories are dependent, (ii) all types and all $fim$ measures in any theory are dependent, and (iii) as a  crucial result in measure theory, the  Glivenko-Cantelli class of functions (formulas) is characterized by dependent measures.
\end{abstract}

\section{Introduction} \label{1} 
The aim of this paper is to introduce and study a concept that we call dependent measure, which has deep roots in measure theory and the concept of $\mu$-stability within it. 
We will show that some of the results from \cite{HPS}, which were obtained under the assumption of $NIP$, hold for dependent measures in any arbitrary theory.
Among these results, we can mention the symmetry property of the Morley products of a measure and its approximation by average of types.

 It is worth noting that some of the arguments presented in this article are inherently similar to \cite{HPS}, although the key notion `dependent measure' allows us to use  facts in measure theory and make a connection between them and model theory.  {\em Surprisingly, in this case, model theory and measure theory have organic relationships, and the results of one domain corresponds to the results of another domain.}  We believe that the new notion of  dependent measure is valuable in itself and this approach can have more applications in future work.

The relationship between $NIP$ and analysis/topology was examined and highlighted in papers  \cite{Simon-Rosenthal}, 
\cite{Iba14}, and
\cite{K-amenable}.  In  \cite{Simon-Rosenthal}, the historical roots of this connection in analysis are presented, and some results of this analytical development have also been proven for $NIP$ formulas. The difference between the approach and results presented in the current paper and those in Simon’s paper lies in the fact that here we study only a single measure and do not assume the $NIP$  for the formula(s). In fact, if every measure possesses the dependence property (as defined in the present paper), we would arrive precisely at the results of Simon’s paper. This distinction makes the results of the current paper more measure-theoretic in nature, in contrast to Simon’s, which are topological.
This intrinsic similarity between Simon’s results and the findings of the present work becomes apparent in Fact~\ref{closure} below.

Let us give our motivation and point of view on the importance of this work.  It is very natural to   generalize the results in the $NIP$ context to arbitrary theories as this approach has already been pursued by generalizations of stable to simple and $NIP$ theories. 
On the other hand, this study will clarify why certain arguments work in $NIP$ theories and how they can be generalized to  outside this context.
Finally, this study identifies the deep links between two different areas of mathematics, namely model theory and measure theory, and their importance in applications as the results of \cite{HPS} are evidence of the usefulness of links between different domains, in the latter case probability theory and model theory.
Apart from these, studying `measures' as mathematical objects, which are the natural generalization of types, is interesting in itself and important in applications.
 
We have listed some of the most important results/observations to make it easier to go through the paper:
   Theorems~\ref{almost fim}, \ref{commute}, 
 \ref{CGH-like},  and Proposition~\ref{fim is dependent}, and  Corollary \ref{symmetry}.

This paper is organized as follows: In the next section we review some basic notions from measure theory. In Section 3  we introduce the notion {\em dependent measure} and give some basic properties of dependent measures. In  Section 4  we generalize some results of \cite{HPS} on the Morley products of measures and symmetric  measures to arbitrary theories. In Section 5
we study the relationship between the two concepts dependence and $fim$.
 We also prove a complement of a result of \cite{CGH} and give some new ideas for the future work. In ``Concluding remarks/questions'' we   will discuss the importance of this approach and questions will be raised about them.

\section{Preliminaries from measure theory} \label{measure theory}  
In this  section we give definitions from  measure theory
with which we be shall concerned, especially the notion of a stable set of functions (or $\mu$-stability) and its properties.

Let $X$ be a compact Hausdorff space. The space of continuous real-valued functions on $X$ is denoted here by $C(X)$.
 The smallest $\sigma$-algebra
containing the open sets is called the class of Borel sets.
By a Borel measure on $X$ we mean a finite measure defined
for all  Borel  sets. A Radon measure on $X$ is a Borel measure which is regular. 
Recall that a measure is {\em complete} if for any null measurable set $E$  every $F\subseteq E$ is measurable.
It is known that every Borel measure on a compact space has a unique extension to a complete measure. {\bf In this paper, we always assume that every Radon measure is complete.}

 In the following, given a measure $\mu$ and $k\geqslant 1$, the symbol  $\mu^k$ stands for $k$-fold product of $\mu$ and  $\mu^*$ stands for  the outer measure of  $\mu$. 
 
 \medskip
 The following fundamental concept introduced by David H.  Fremlin, namely the concept of $\mu$-stability, is a property for a set of functions relative to a fixed measure. In  model theory, we will  fix the set of functions/formulas and introduce its corresponding concept for measures, namely {\em dependent measure}.\footnote{We emphasis that Fremlin's use of the word ``stable" is not directly connected to the use of this word in model theory.} (Cf. Definition~3.2 for the definition of dependent measure.)
 From a logical point of view, this notion was first studied in \cite{K-amenable} in the framework of {\em  integral logic}.

\begin{Definition}[$\mu$-stability]  \label{Talagrand-stable}
	Let $A\subseteq C(X)$ be a pointwise bounded family
	of real-valued continuous functions  on $X$. Suppose that $\mu$
	is a Borel probability measure on $X$. We say that $A$ is {\em $\mu$-stable}, if   there is {\bf no}   measurable subset $E\subseteq X$ 	with  $\mu(E)>0$ and $s<r$  such that for each $k=\{1,\ldots k\}$ $$\mu^k\Big\{\overline{w}\in E^k: \ \forall I \subseteq k \ \exists f\in A \ \bigwedge_{i\in I}f(w_i)\leqslant s \wedge \bigwedge_{i\notin I} f(w_i)\geqslant r  \Big\}=(\mu E)^k.$$
\end{Definition}

\begin{Remark}  \label{remark-00}
	(i) The notion $\mu$-stable is an adaptation of \cite[465B]{Fremlin4}. Indeed,   by Proposition~4 of \cite{Talagrand}, it is easy to check the  equivalence. For this, notice that every function $f\in A$ is continuous on $X$ and so the left set in the equation above is measurable. This means that $(M)$ property of Proposition~4 of \cite{Talagrand} holds. 
	\newline
	(ii) A set $A$ of continuous functions on $X$ is stable with respect to $\mu$  iff $A$ is stable with respect to its completion $\bar\mu$. Indeed, recall that the product measures of $\mu,\bar\mu$ are the same. (See Proposition~465C(i) of \cite{Fremlin4}--Version of 26.8.13.)
\end{Remark}

The following are important results  connecting the notion of `stable' set of functions.\footnote{The article \cite{Simon-Rosenthal} by Simon is one that includes  results along the lines of Fact 2.3 and explains the connection between $NIP$ and a topological property of the closure of a set of functions/formulas.}
\begin{Fact}[\cite{Fremlin4}, Proposition~465D(b)]  \label{closure}
	Let $X$ be a compact Hausdorff space, $\mu$ a Radon probability measure on $X$, and $A\subseteq C(X)$. If $A$ is $\mu$-stable, then every function in the pointwise closure of $A$ is $\mu$-measurable.
\end{Fact}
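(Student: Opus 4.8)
The plan is to argue by contradiction: assume $A$ is $\mu$-stable but that some $g$ in the pointwise closure of $A$ is not $\mu$-measurable, and produce a measurable $E$ with $\mu(E)>0$ and reals $s<r$ witnessing the failure of $\mu$-stability in the sense of Definition~\ref{Talagrand-stable}. Two things do the work: a purely measure-theoretic lemma converting non-measurability of $g$ into an ``independence'' of two of its level sets, and the fact that $g$ lying in the pointwise closure of $A$ is a finitary statement — for every finite $P\subseteq X$ and every $\varepsilon>0$ there is $f\in A$ with $|f-g|<\varepsilon$ on $P$.

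First I would establish the lemma: \emph{if $g\colon X\to\R$ is not $\mu$-measurable, then there are rationals $a<b$ and a measurable $E$ with $\mu(E)>0$ such that $\mu^*(\{g\le a\}\cap F)=\mu(F)=\mu^*(\{g\ge b\}\cap F)$ for every measurable $F\subseteq E$.} Since $g$ is non-measurable, the level set $W=\{g\ge b\}$ is non-measurable for some rational $b$; taking measurable envelopes $E_1\supseteq W$, $E_2\supseteq X\setminus W$ and setting $E=E_1\cap E_2$ gives $\mu(E)=\mu^*W-\mu_*W>0$, and by the defining property of envelopes both $\{g\ge b\}$ and $\{g<b\}$ have full outer measure in every measurable $F\subseteq E$. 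To descend to a rational $a<b$, suppose that on no positive-measure measurable subset of $E$ does $\{g\le a\}$ have full outer measure in every measurable piece; an exhaustion argument with measurable envelopes then forces $\{g\le a\}\cap E$ to be $\mu$-negligible for \emph{every} rational $a<b$, whence $\{g<b\}\cap E=\bigcup_{a<b}(\{g\le a\}\cap E)$ is $\mu$-negligible — contradicting what was just shown about $\{g<b\}$.

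Now fix rationals $a<s<r<b$, put $\varepsilon=\min(s-a,\,b-r)$, and let me check that $(E,s,r)$ witnesses $\neg(\mu\text{-stable})$. Fix $k\ge1$, and for $I\subseteq\{1,\dots,k\}$ set
$$G_I=\Big\{\bar w\in E^k:\ \exists f\in A\ \ \bigwedge_{i\in I}f(w_i)\le s\ \wedge\ \bigwedge_{i\notin I}f(w_i)\ge r\Big\},$$
so the set appearing in Definition~\ref{Talagrand-stable} for $(E,s,r,k)$ is $\bigcap_{I\subseteq k}G_I$. Each $G_I$ is $\mu^k$-measurable, by the reasoning that makes that set measurable in the first place (continuity of the members of $A$; cf.\ Remark~\ref{remark-00}(i)). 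If $\bar w\in E^k$ satisfies $w_i\in\{g\le a\}$ for $i\in I$ and $w_i\in\{g\ge b\}$ for $i\notin I$, then the finitary property of the pointwise closure applied to $\{w_1,\dots,w_k\}$ gives $f\in A$ with $|f(w_j)-g(w_j)|<\varepsilon$ for all $j$, hence $f(w_i)\le s$ for $i\in I$ and $f(w_i)\ge r$ for $i\notin I$; thus
$$G_I\ \supseteq\ \prod_{i\in I}\big(\{g\le a\}\cap E\big)\times\prod_{i\notin I}\big(\{g\ge b\}\cap E\big).$$
By the lemma $E$ is a measurable envelope of each of $\{g\le a\}\cap E$ and $\{g\ge b\}\cap E$, and as measurable envelopes multiply under products (a standard fact for $\sigma$-finite measures), the product on the right has $E^k$ as measurable envelope, so its outer $\mu^k$-measure is $(\mu E)^k$. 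Since $G_I$ is measurable and contained in $E^k$, this forces $\mu^k(G_I)=(\mu E)^k$, i.e.\ $\mu^k(E^k\setminus G_I)=0$. Hence $E^k\setminus\bigcap_I G_I=\bigcup_I(E^k\setminus G_I)$ is a finite union of null sets and the set in Definition~\ref{Talagrand-stable} has $\mu^k$-measure $(\mu E)^k$; as $k$ was arbitrary, $A$ is not $\mu$-stable, a contradiction.

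The main obstacle is the lemma of the second paragraph — converting ``$g$ non-measurable'' into the usable dichotomy that two complementary level sets are everywhere of full outer measure on a single positive-measure set. After that the argument is soft: it only uses the finitary description of the pointwise closure, the measurability of the sets $G_I$ (which, notably, end up having \emph{full} $\mu^k$-measure even though the concrete product subsets witnessing this have empty common intersection over all $I$), and multiplicativity of measurable envelopes; all of this rests on $\mu$ — and the completed product $\mu^k$ — being complete, which is in force throughout the paper.
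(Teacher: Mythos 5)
The paper gives no proof of this Fact at all: it is quoted directly from Fremlin \cite{Fremlin4}, Proposition 465D(b), so there is no internal argument to compare yours against. Judged on its own, your reconstruction is essentially the standard Fremlin--Talagrand proof and it is correct. The lemma of your second paragraph (non-measurability of $g$ yields rationals $a<b$ and a positive-measure measurable $E$ on which both $\{g\le a\}$ and $\{g\ge b\}$ have full outer measure in every measurable subset) is the right reduction and your envelope argument for it is sound; the ``exhaustion'' can even be compressed, since the measurable envelope of $\{g\le a\}\cap E$ inside $E$ is itself the required subset as soon as it has positive measure, and if it is null for every rational $a<b$ then $\{g<b\}\cap E$ is null, contradicting the first step. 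The approximation step ($a<s<r<b$, $\varepsilon=\min(s-a,b-r)$, and the finitary description of the pointwise closure) and the computation of the outer measure of the product set via multiplicativity of measurable envelopes are likewise correct.

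The one genuine pressure point --- which you flag and discharge by citing Remark~\ref{remark-00}(i) --- is the $\mu^k$-measurability of the sets $G_I$. Each $G_I$ is an arbitrary, possibly uncountable, union of closed sets intersected with $E^k$, and continuity of the members of $A$ by itself does not make such a union measurable; this is really Talagrand's condition $(M)$, a hypothesis carried by the paper's chosen $\forall I\,\exists f$ form of Definition~\ref{Talagrand-stable}, rather than something provable from the stated data. Your argument genuinely needs it: as you yourself observe, the product sets witnessing $(\mu^k)^*(G_I)=(\mu E)^k$ have empty common intersection over $I$, and full outer measure does not survive intersections without measurability. Fremlin's own formulation of stability sidesteps this by taking a single union over $f\in A$ with the sign pattern encoded in $k+l$ coordinates and measuring it with the outer measure, so that exhibiting one product subset of full outer measure finishes the proof with no measurability claim. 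In short: your proof is correct within the paper's framework and faithful to the source; the only caveat is inherited from the paper's Remark, not introduced by you.
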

As mentioned above, in this paper, we assume that all Radon measures are complete. Note that the completeness of $\mu$ is absolutely necessary in Fact~\ref{closure}.

Recall that the convex hull of $A\subseteq C(X)$, denoted by $\Gamma(A)$ or $\text{conv}(A)$, is the set of all convex combinations of functions in $A$, that is, the set of functions of the form $\sum_1^k r_i\cdot f_i$ where $k\in \Bbb N$, $f_i\in A$, $r_i\in{\Bbb R}^+$ and $\sum_1^k r_i=1$. 

The following theorem is the fundamental result on stable sets of functions. It  asserts that a set of functions  is stable iff it is a Glivenko--Cantelli class  iff  its convex  hull  is a Glivenko--Cantelli class (cf. \cite{K-GC}). 
\begin{Fact}[\cite{Fremlin4}] \label{fundamental}
Let $X$ be a compact Hausdorff space, $\mu$ a Radon probability measure on $X$, and $A\subseteq C(X)$ uniformly bounded. Then the following are equivalent:

(i) $A$ is $\mu$-stable.

(ii)  The convex  hull of $A$ is $\mu$-stable.

(iii)  $\lim_{k\to\infty}\sup_{f\in  A}|\frac{1}{k}\sum_1^k f(w_i)-\int f|=0$ for almost all $w\in X^{\Bbb N}$.

\medskip\noindent
(Here, $w=(w_1,w_2,\ldots)\in X^{\Bbb N}$  and the measure on $X^{\Bbb N}$ is the usual product measure.)
\end{Fact}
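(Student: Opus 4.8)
The plan is to reduce the whole statement to the single substantial equivalence (i)$\,\Leftrightarrow\,$(iii) for uniformly bounded $A\subseteq C(X)$ — the Talagrand--Fremlin characterization of Glivenko--Cantelli classes — and to obtain the assertions about the convex hull almost for free. Three soft observations do this. First, $\Gamma(A)$ is again a uniformly bounded subset of $C(X)$, so (once proved) the equivalence (i)$\,\Leftrightarrow\,$(iii) applies verbatim with $\Gamma(A)$ in place of $A$. Second, for each fixed $w=(w_1,w_2,\dots)\in X^{\N}$ and each $k$ the functional $f\mapsto\frac1k\sum_1^k f(w_i)-\int f$ is linear on $C(X)$, so for any convex combination $g=\sum_j r_jf_j$ one has $|\frac1k\sum_1^k g(w_i)-\int g|\leqslant\max_j|\frac1k\sum_1^k f_j(w_i)-\int f_j|$; hence the supremum of the absolute deviation is the same over $A$ as over $\Gamma(A)$, and (iii) holds for $A$ if and only if it holds for $\Gamma(A)$. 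Third, directly from Definition~\ref{Talagrand-stable}, any triple $(E,s,r)$ witnessing the failure of $\mu$-stability of $A$ witnesses it for every superset of $A$ as well (the clause ``$\exists f\in A$'' only becomes easier to satisfy), so $A\subseteq\Gamma(A)$ yields (ii)$\,\Rightarrow\,$(i) immediately. Combining these with the two instances of (i)$\,\Leftrightarrow\,$(iii) (applied to $A$ and to $\Gamma(A)$) shows that (i), (ii) and (iii) are all equivalent.

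It then remains to prove (i)$\,\Leftrightarrow\,$(iii) for uniformly bounded $A\subseteq C(X)$, where I would follow Talagrand's argument as organized in \cite{Talagrand} and \cite{Fremlin4}. By Proposition~4 of \cite{Talagrand} — using, as in Remark~\ref{remark-00}(i), that all the sets occurring are measurable because $A\subseteq C(X)$ — $\mu$-stability of $A$ is equivalent to Talagrand's property $(M)$. For (i)$\,\Rightarrow\,$(iii) one first discretizes: fixing $\varepsilon>0$ and covering a common bound $[-b,b]$ of the functions by finitely many intervals of length $<\varepsilon$, it is enough to control, uniformly over $A$ and over these intervals, the fluctuation of the empirical masses of the associated sub-level and super-level sets; a symmetrization step and a maximal inequality bound this fluctuation by the expected size of the largest ``alternating'' (measure-independent) sub-configuration that $A$ produces among $w_1,\dots,w_k$, and property $(M)$ is precisely the statement that such sub-configurations are not large with non-negligible probability, so the bound tends to $0$ and the uniform averages converge almost surely. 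For (iii)$\,\Rightarrow\,$(i) I would argue contrapositively: given a witness $(E,s,r)$ to the failure of $\mu$-stability, the tuples in $E^k$ that are \emph{not} shattered by $A$ form a $\mu^k$-null set for every $k$, so for product-almost-every $w\in X^{\N}$ every finite initial segment of the subsequence of those $i$ with $w_i\in E$ is shattered; since the ordinary strong law makes the density of such $i$ tend to $\mu(E)>0$, one produces, for infinitely many $n$, a function in $A$ whose average over $w_1,\dots,w_n$ differs from its integral by at least a fixed $\delta=\delta(r-s,\mu(E))>0$, so (iii) fails.

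The main obstacle is exactly this last extraction, together with its counterpart inside (i)$\,\Rightarrow\,$(iii): one must pass between ``failure of uniform convergence'' and ``arbitrarily large measure-shattered configurations'', and, dually, convert shattering \emph{on the part of the sample lying in $E$} into a deviation of the \emph{full} empirical average $\frac1n\sum_1^n f(w_i)$ from $\int f$. The functions of $A$ are not controlled off $E$, so when $\mu(E)$ is small the naive estimate — using the maximal and the empty choices of $I$ on the first $k$ points of $E$ — does not suffice, and one must either refine the choice of the sample sizes $n$ or feed more of the shattering back in to absorb the off-$E$ contribution. This is the Ramsey-type heart of Talagrand's paper, and in writing up Fact~\ref{fundamental} I would import it rather than reprove it (see also \cite{K-GC} for a model-theoretic packaging). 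Once (iii) is in hand, Fact~\ref{closure}, which is where completeness of $\mu$ enters, supplies the companion measurability statement, though it plays no role in the equivalences themselves.
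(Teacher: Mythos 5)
Your proposal is correct, and for the convex-hull clause it takes a genuinely different (and more self-contained) route than the paper. The paper disposes of Fact~\ref{fundamental} entirely by citation: (i)$\Rightarrow$(ii) is Theorem~465N(a) of \cite{Fremlin4}, (ii)$\Rightarrow$(i) is evident because a witness $(E,s,r)$ to instability of a set witnesses instability of every superset, and (i)$\Leftrightarrow$(iii) is Theorem~465M of \cite{Fremlin4}. You keep the second and third of these but replace the appeal to 465N(a) by the elementary observation that, for fixed $w$ and $k$, the functional $f\mapsto\frac{1}{k}\sum_1^k f(w_i)-\int f\,d\mu$ is linear, so its supremum in absolute value over $\Gamma(A)$ equals its supremum over $A$; hence (iii) transfers verbatim between $A$ and $\Gamma(A)$, and two applications of (i)$\Leftrightarrow$(iii) (legitimate, since $\Gamma(A)$ is again a uniformly bounded subset of $C(X)$) deliver (i)$\Rightarrow$(ii). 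This buys the whole statement from a single black box instead of two, at the cost of routing the convex-hull implication through the law-of-large-numbers characterization rather than through Fremlin's direct proof of 465N(a); it is essentially the reduction implicit in the paper's own gloss that a set of functions is stable iff it is a Glivenko--Cantelli class iff its convex hull is one. For the one hard equivalence (i)$\Leftrightarrow$(iii) you, like the paper, import Talagrand's theorem, and your sketch of its internal structure (discretization, symmetrization, extraction of large shattered configurations, and the genuine difficulty of converting shattering on the $E$-portion of the sample into a deviation of the full empirical average when $\mu(E)$ is small) is accurate; you are right not to claim to reprove it. One small quibble: your closing remark that completeness of $\mu$ ``plays no role in the equivalences themselves'' is at odds with the paper's explicit warning that completeness is needed for (i)$\Rightarrow$(iii) --- when $A$ is uncountable the set of $w$ on which the uniform averages converge is produced by an outer-measure argument and need not be Borel --- and is harmless here only because of the paper's standing convention that every Radon measure is complete.
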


\noindent {\em Explanation.} The direction  (i)~$\implies$(ii) is  Theorem~465N(a) of \cite{Fremlin4}. The converse is evident. (See also Proposition~465C(a)(i) of \cite{Fremlin4}.)
The equivalence  (i)~$\iff$(iii) is the equivalence  (i)~$\iff$(ii)  of Theorem~465M of \cite{Fremlin4}. Again, we emphasize  that the completeness of $\mu$ is   necessary in the direction  (i)~$\implies$(iii).

\medskip
The last fact shows that, on stable sets  of functions, the topology of pointwise convergence is stronger than the topology of  convergence in measure.

\begin{Fact}[\cite{Fremlin4}, Thm.~465G] \label{convergence in measure}
Let $X$ be a compact Hausdorff space, $\mu$ a Radon probability measure on $X$, and $A\subseteq C(X)$ a $\mu$-stable set. Let $(f_i)$ be a net in $A$ such that $f_i\to f$ in the topology  of pointwise convergence. Then $\int|f_i-f|\to 0$.
\end{Fact}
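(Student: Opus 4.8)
The plan is to obtain $L^1$-convergence as a soft consequence of the strong uniform law of large numbers packaged in the implication (i)~$\implies$(iii) of Fact~\ref{fundamental}. First, $f$ lies in the pointwise closure of $A$, so $f$ is $\mu$-measurable by Fact~\ref{closure}; and since $A$ is uniformly bounded, say $|g|\leqslant M$ for every $g\in A$, also $|f|\leqslant M$, so $|f_i-f|\leqslant 2M$ and every integral below is finite.

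The key move is to apply Fact~\ref{fundamental} not to $A$ but to the family
$$B=\bigl\{\,|g-f| : g\in A\,\bigr\}.$$
This $B$ is uniformly bounded by $2M$, and it is again $\mu$-stable: stability passes to the pointwise closure inside the $\mu$-measurable functions and is preserved under composition with a continuous function of finitely many variables (here $\varphi(s,t)=|s-t|$), in Fremlin's measure-theoretic formulation of the notion (\cite{Fremlin4}, \S465, of which Definition~\ref{Talagrand-stable} is an adaptation; see Remark~\ref{remark-00}(i)). Applying (i)~$\implies$(iii) of Fact~\ref{fundamental} to $B$, choose one point $w=(w_1,w_2,\ldots)\in X^{\Bbb N}$ in the full-measure set on which
$$r_k:=\sup_{g\in A}\Bigl|\,\tfrac1k\textstyle\sum_{j=1}^{k}|g(w_j)-f(w_j)| - \int|g-f|\,d\mu\,\Bigr|\ \longrightarrow\ 0\qquad(k\to\infty),$$
the crucial point being that $r_k$ is uniform over $g\in A$.

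It then follows that for every index $i$ and every $k$,
$$\int|f_i-f|\,d\mu\ \leqslant\ \tfrac1k\sum_{j=1}^{k}|f_i(w_j)-f(w_j)|\ +\ r_k .$$
Fix $k$. Because $f_i\to f$ pointwise on $X$, each of the finitely many terms $|f_i(w_j)-f(w_j)|$ tends to $0$ along the net, hence so does their average, and therefore $\limsup_i\int|f_i-f|\,d\mu\leqslant r_k$. Letting $k\to\infty$ yields $\limsup_i\int|f_i-f|\,d\mu=0$, which is the claim. (All of this is valid for nets: a finite sum of nets over the same directed set converging to $0$ converges to $0$, and a nonnegative net whose $\limsup$ lies below each term of a null sequence converges to $0$.)

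The step I expect to be delicate is precisely the claim that $B$ is $\mu$-stable. Definition~\ref{Talagrand-stable} is phrased for families of \emph{continuous} functions with \emph{constant} thresholds $s<r$, whereas $|g-f|$ need not be continuous, and subtracting the fixed function $f$ in effect turns the constant thresholds into the measurable function $f$ itself; both issues must be absorbed by Fremlin's more flexible formulation (\cite{Fremlin4}, 465B together with the stability of pointwise closures and of images under continuous maps). One might hope to avoid this by estimating $\int|f_i-f|\,d\mu$ through duality, testing against measurable sign functions, but that runs into the same kind of preservation question, so routing the argument through Fremlin's framework seems to be the natural path. Granting it, the proof is complete.
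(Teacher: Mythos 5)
The paper offers no proof of this statement at all: it is imported verbatim as Fact~\ref{convergence in measure} with a bare citation to \cite{Fremlin4}, Theorem~465G. Your argument is therefore necessarily a different route, and it is a sound one in outline: deducing the pointwise-to-$L^1$ implication by applying the uniform strong law (the analogue of (i)~$\implies$(iii) of Fact~\ref{fundamental}) to the family $B=\{|g-f|:g\in A\}$ and evaluating along a single good sample $w$ is essentially how 465G is obtained from the 465M-type results, and the final $\limsup$ bookkeeping with nets is correct. Two caveats. First, you silently add uniform boundedness of $A$, which is not in the hypotheses of the Fact (Definition~\ref{Talagrand-stable} only asks for pointwise boundedness); some such integrability hypothesis is genuinely needed for the $L^1$ conclusion and for the strong law, and it is harmless in every application the paper makes (all functions there are $\{0,1\}$- or $[0,1]$-valued), but it should be stated. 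Second, as you yourself flag, the application of Fact~\ref{fundamental} to $B$ steps outside that fact as stated in the paper, since $B\not\subseteq C(X)$ and the fixed measurable function $f$ plays the role of a variable threshold; this cannot be repaired inside the paper's own statements and must be routed through Fremlin's measure-theoretic formulation, where it does go through --- a single measurable function over a totally finite measure is a stable set, stability is preserved under differences and absolute values (465C(a),(b)), and the uniform strong law 465M holds for arbitrary uniformly bounded stable sets of measurable functions, with $f$ itself measurable by Fact~\ref{closure}. So your proof is correct modulo these externally supplied preservation lemmas, which you have correctly identified as the locus of the real work; what it buys over the paper's citation is an explicit reduction of Fact~\ref{convergence in measure} to the Glivenko--Cantelli characterization in Fact~\ref{fundamental}, making the logical dependence between the two quoted facts visible.
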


\section{Dependent Keisler measures} \label{2} 
In this  section we introduce the notion of dependent measure (Definition~\ref{dependent measure}), and give some   of its principal  properties and examples.

The model theory notation is standard, and a text such as \cite{Simon} will be sufficient background.  We
fix a  first order language $L$, a complete   $L$-theory $T$ (not necessarily $NIP$), an $L$-formula $\phi(x,y)$, and a
subset $A$ of the monster model of $T$.  The monster model is denoted by $\cal U$. We let $\phi^*(y, x)
= \phi(x, y)$. 
 We define $p=tp_\phi(a/A)$ (where $a\in\cal U$ is a tuple of the  appropriate length) as the function
$\phi(p,y):A\to\{0,1\}$ by $b\mapsto\phi(a,b)$. This function is
called a complete $\phi$-type  over $A$. The set of all complete
$\phi$-types over $A$ is denoted by $S_\phi(A)$. We equip
$S_\phi(A)$ with the least topology in which all functions
$p\mapsto\phi(p,b)$ (for $b\in A$) are continuous. It is compact
and Hausdorff, and is  totally disconnected. Let
$X=S_{\phi^*}(A)$ be the space of complete
$\phi^*$-types over $A$.  Note that the functions
$q\mapsto\phi(a,q)$ (for $a\in A$) are {\em continuous}, and as
$\phi$ is fixed we can identify this set of functions with $A$.
So, $A$ is a subset of all bounded continuous functions on $X$,
denoted by $A\subseteq C(X)$.  

\medskip
A Keisler measure over $A$ in the variable $x$ is a {\em finitely}
additive probability measure on the Boolean algebra of $A$-definable  sets  in the variable $x$, denoted by $L_x(A)$. Every Keisler measure over $A$ can be represented by a regular Borel probability measure on  $S_x(A)$, the space of types  over $A$ in the variable $x$. A   measure   over $\cal U$ is called a {\em global} Keisler  measure.  The set of all measures over $A$ in the variable $x$ is denoted by ${\frak M}_x(A)$ or ${\frak M}(A)$. We will sometimes write $\mu$ as $\mu_x$ or $\mu(x)$ to emphasize that $\mu$ is a measure on the variable $x$.

For a formula $\phi(x,y)$, a Keisler $\phi$-measure over $A$ in the variable $x$ is a {\em finitely}
additive probability measure on the Boolean algebra of $\phi$-definable  sets over $A$ in the variable $x$, denoted by $L_\phi(A)$. Recall that a $\phi$-definable  set  over $A$ is a Boolean combination of the instances $\phi(x,b),b\in A$. The set of all $\phi$-measures over $A$ in the variable $x$ is denoted by  ${\frak M}_\phi(A)$.

\medskip
 Given a set $A$, an $L(A)$-formula $\theta(x)$, and types $p_1(x),\ldots,p_n(x)$ over $A$, the average measure of them (for $\theta(x)$), denoted by $\text{Av}(p_1,\ldots,p_n)$, is defined as follows:
$$\text{Av}(p_1,\ldots,p_n;\theta(x)):=\frac{\big|\{i:~\theta(x)\in p_i, i\leq n\}\big|}{n}.$$

 \medskip
We first revisit a useful dictionary (of well known facts which have been folklore for quite some time) 
 that is used in the rest of the paper. 
 In Fact~3.5 and Remark~3.6 of \cite{K-GC}, the proof of these facts, along with precise references to sources and their history, has been provided.
  For the definition of finitely satisfiable (definable, Borel definable) measures see Definition~7.16 of \cite{Simon}.

\begin{Fact} \label{Dirac} Let $T$ be a complete theory, $A$ a small set and $\phi(x,y)$ a formula.
	\newline
	(i)(Pillay \cite{Pillay}) There is a correspondence between global $A$-finitely satisfiable $\phi$-types $p(x)$ and the functions in the pointwise closure of   all functions $\phi(a,y):S_{\phi^*}(A)\to \{0,1\}$ for $a\in A$, where  $\phi(a,q)=1$ if and only if $\phi(a,y)\in q$.
	\newline
	(ii) The map $p\mapsto\delta_p$ is a correspondence   between global  $\phi$-types $p(x)$  and  Dirac measures $\delta_p(x)$ on $S_{\phi}({\cal U})$, where  $\delta_p(B)=1$ if $p\in B$, and $=0$ if   otherwise. Moreover,    $p(x)$ is finitely satisfiable in $A$ iff $\delta_p(x)$ is finitely satisfiable in $A$.
	\newline
	(iii) There is a correspondence between global  $\phi$-measures $\mu(x)$ and  regular Borel probability measures on $S_{\phi}({\cal U})$. Moreover, a  global  $\phi$-measure  is finitely satisfiable in $A$  iff its corresponding regular Borel probability measure is finitely satisfiable in $A$.
	\newline
	(iv) The closed convex hull of Dirac measures  $\delta(x)$  on $S_{\phi}({\cal U})$ is exactly all regular Borel probability measures $\mu(x)$ on $S_{\phi}({\cal U})$. Moreover,
	the closed convex hull of Dirac measures on $S_{\phi}({\cal U})$ which are finitely satisfiable in $A$   is exactly all regular Borel probability measures $\mu(x)$ on $S_{\phi}({\cal U})$ which are  finitely satisfiable in $A$. 
	\newline
	(v) There is a correspondence between global $A$-finitely satisfiable $\phi$-measures $\mu(x)$ and the functions in the pointwise closure of   all  functions of the form $\frac{1}{n}\sum_1^n \theta(a_i,y)$ on 
	$S_{\theta^*}(A)$, where $\theta$ is a $\phi^*$-formula,\footnote{Recall that a $\phi^*$-formula is a Boolean combination of instances of $\phi(a,y)$, $a\in  A$.} $a_i\in A$,  and  $\theta(a_i,q)=1$ if and only if $\theta(a_i,y)\in q$.
\end{Fact}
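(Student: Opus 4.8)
The plan is to establish the five items in the order (ii), (iii), (i), (iv), (v), since (i) and (v) rely on the first four. Parts (ii) and (iii) are Stone duality together with the classical extension of finitely additive measures. For (ii): $p\mapsto\delta_p$ is injective because a singleton in the Hausdorff space $S_\phi(\mathcal U)$ is closed, hence Borel, of $\delta_p$-mass $1$, and every Dirac measure is of this form by definition; for the ``moreover'' clause, a definable set $[\psi]$ has positive $\delta_p$-mass iff $\psi\in p$, so $\delta_p$ is finitely satisfiable in $M$ iff every $\psi\in p$ is realised in $M$, i.e.\ iff $p$ is. For (iii): $L_\phi(\mathcal U)$ is the algebra of clopen subsets of the totally disconnected compact space $S_\phi(\mathcal U)$; a finitely additive probability measure on it is automatically countably additive there (a clopen set is covered by finitely many members of any clopen cover, by compactness), so Carath\'eodory gives a unique regular Borel extension, and restriction to clopen sets is its inverse. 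Finite satisfiability is tested only on definable (clopen) sets, on which a $\phi$-measure and its Borel extension agree, so the two notions coincide.

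For (i) I would exhibit the bijection explicitly. Given a global $\phi$-type $p(x)$ finitely satisfiable in $M$, define $F_p\colon S_{\phi^*}(M)\to\{0,1\}$ by $F_p(q)=1$ iff $\phi(x,b)\in p$ for some $b\in\mathcal U$ realising $q$. The only place where finite satisfiability is genuinely used is well-definedness: if $b,b'\models q$ and $\phi(x,b)\wedge\neg\phi(x,b')\in p$, then some $a\in M$ satisfies $\phi(a,b)\wedge\neg\phi(a,b')$, contradicting $tp_{\phi^*}(b/M)=tp_{\phi^*}(b'/M)$; the same exchange gives injectivity of $p\mapsto F_p$. Next I would use the standard fact that $p$ is finitely satisfiable in $M$ precisely when it lies in the closure of $\{tp_\phi(a/\mathcal U):a\in M\}$ inside $S_\phi(\mathcal U)$: writing $p=\lim_i tp_\phi(a_i/\mathcal U)$ and evaluating at realisations of each $q$ gives $\phi(a_i,\cdot)\to F_p$ pointwise, so $F_p$ lies in the asserted pointwise closure; conversely any $g=\lim_i\phi(a_i,\cdot)$, after passing (compactness of $S_\phi(\mathcal U)$) to a subnet along which $tp_\phi(a_i/\mathcal U)$ converges to some $p$ --- necessarily finitely satisfiable in $M$ --- satisfies $g=F_p$.

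For (iv), the regular Borel probability measures on the compact Hausdorff space $S_\phi(\mathcal U)$ form a convex, weak$^*$-compact set in which the finitely supported measures are dense (partition into clopen pieces of small oscillation for any finite family of test formulas), so the closed convex hull of the Dirac measures is everything. For the refined statement, the measures finitely satisfiable in $M$ form a weak$^*$-closed convex set, hence contain the closed convex hull of $\{\delta_p:p\text{ finitely satisfiable in }M\}$; conversely, if $\mu$ is finitely satisfiable in $M$ then it annihilates every clopen set disjoint from $C:=\overline{\{tp_\phi(a/\mathcal U):a\in M\}}$, so by inner regularity it is supported on $C$, and by the type-space content of (i) this $C$ is exactly $\{p:p\text{ finitely satisfiable in }M\}$; applying the density statement on the compact space $C$ puts $\mu$ in the required closed convex hull.

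Finally (v) is (iv) transported through (i): the Dirac measures finitely satisfiable in $M$ correspond to the pointwise closure of $\{\phi(a,\cdot):a\in M\}$, so the $\phi$-measures finitely satisfiable in $M$ correspond to its closed convex hull, which equals the pointwise closure of the convex hull, i.e.\ of the averages $\tfrac1n\sum_1^n\phi(a_i,\cdot)$; since a $\phi$-measure must be evaluated on all $\phi$-definable sets and not merely on the instances $\phi(x,b)$, the formula $\theta$ must be allowed to range over all $\phi^*$-formulas, which produces exactly the family of functions in the statement. I expect the main obstacle to be the well-definedness in (i) --- isolating that ``finitely satisfiable in $M$'' is precisely the hypothesis making the truth value of $\phi(x,b)\in p$ depend only on $tp_{\phi^*}(b/M)$ --- together with the support computation feeding the finitely-satisfiable halves of (iv) and (v).
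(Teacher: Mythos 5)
The paper states this dictionary as a Fact imported from \cite{K-GC} (with (i) attributed to Pillay) and gives no proof of its own, so there is nothing internal to compare against; your argument is the standard one --- Stone duality plus the Carath\'eodory extension for (ii)--(iii), the finite-satisfiability exchange for well-definedness in (i), weak$^*$-density of finitely supported measures together with the support computation for (iv), and transport of (iv) through (i) for (v) --- and it is correct. You also correctly isolate the only two points where finite satisfiability in $M$ is genuinely used (well-definedness of $F_p$ on $S_{\phi^*}(M)$, and concentration of an $M$-finitely satisfiable measure on the closure of the types realized in $M$), as well as the reason $\theta$ must range over all $\phi$-formulas in (v).
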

The above fact actually shows the ideology that we follow in this article. That is, the finitely satisfied (and invariant) measures in this paper are {\em functions} on specific topological spaces. The reader of this article can become more familiar with our approach as well as its historical trend by reading \cite{K3}, \cite{K-definable}, \cite{KP} and \cite{K-GC}.

\medskip\noindent
{\bf Convention.}  Recall that every regular Borel probability measure $\mu$ has a unique completion $\bar \mu$. In  this paper, without  loss of generality we can assume that every measure is complete. That is, $\mu=\bar\mu$. (The crucial notion of this paper (i.e. Definition~\ref{dependent measure}) is neutral to completion. Cf. Remark~\ref{remark-1}(ii) below.)

\medskip
 Let $\mu\in{\frak M}_x(A)$ and $\mu_\phi$ its restriction to $S_\phi(A)$ (equivalently, its restriction to the  Boolean algebra of $\phi$-formulas over $A$).
Notice that the restriction map $r_\phi:S_x(A)\to S_\phi(A)$ is a quotient map, and $\mu_\phi(X)=\mu(r_\phi^{-1}(X))$
 for any Borel subset $X\subseteq S_\phi(A)$. (See Remark~1.2 in \cite{CGH}.) For $A\subseteq B$ and $\mu\in {\frak M}_x(B)$, $\mu|_A\in {\frak M}_x(A)$ is the restriction of $\mu$ by the quotient map 
 $r:S_x(B)\to S_x(A)$. The restriction of $\mu|_A$ to $S_\phi(A)$ is denoted by $\mu_{\phi,A}$.
 
 \medskip
 The crucial notion of the paper is as follows. (Compare  Definition~\ref{Talagrand-stable}). First, we need to introduce a notation. Let $M$ be a model of $T$, $A\subseteq B\subseteq M$, and $\phi(x,y)$ an $L$-formula. For any $E\subseteq S_\phi(B)$  we write 
 \begin{align*}
 D_k(A, B, E, \phi) = \Big\{\overline{p}\in E^k: \ \forall I \subseteq k \ \exists b\in A \ \bigwedge_{i\in I}\phi(p_i,b)= 0 \wedge \bigwedge_{i\notin I} \phi(p_i,b)=1\Big\}.
 \end{align*}
 	\noindent
 (Recall that $\phi(p_i,b)= 1$ if $\phi(x,b)\in p_i$  and $\phi(p_i,b)=0$ in otherwise.)
\begin{Definition}[Dependent Measures]  \label{dependent measure}
	Let $T$ be a complete theory, $M$  a  set/model, and $\mu_x\in{\frak M}(M)$. 
	\newline
	(i) Suppose that $A\subseteq B\subseteq M$. We say that $\mu$ is {\em dependent over  $B$ and in $A$}, if for any formula  $\phi(x,y)$ there is {\bf no} $E\subseteq S_\phi(B)$  
	measurable, $\mu_{\phi,B}(E)>0$ (where $\mu_{\phi,B}=(\mu|_B)_\phi$)   such that for each $k$, $(\mu_{\phi,B}^{k})D_k(A, B, E, \phi)=(\mu_{\phi,B} E)^{k}$.
	\newline
	(ii) We say that $\mu$ is {\em dependent}, if   $\mu$ is dependent over $M$ and in $M$.  
\end{Definition}
The above parameters $A$ and $B$  have complicated the definition. The reason for using  parameters   is that  it gives us the possibility to expand or restrict functions/formulas and spaces, which will be used in some places. (In Remark~\ref{remark-1}(iii) below, we study monotonicity  properties of ``over $B$ and in $A$" as one varies $B$ and $A$.)

On the other hand, it is possible to provide a definition that can be explained by model-theoretic intuition. In fact, using the argument of Theorem~\ref{almost fim} below (or Fact~\ref{fundamental}), it is easy to show that $\mu_x\in{\frak M}(M)$ is dependent if and only if for any formula $\phi(x,y)$,
$$\mu^\omega\Big(\Big\{\bar p\in\prod_{i<\omega}S_{x_i}(M):  \lim_{k\to\infty}\sup_{b\in M}\big|\mu(\phi(x,b))-\frac{1}{k}\sum_{i=1}^{k} \phi(p_i,b)\big|=0  \Big\}\Big)=1.  \ \ \ \   (\Diamond)$$  
\newline
(Here $\mu^\omega$ is the usual product measure.)

\begin{Remark}  \label{remark-1}
	(i) The notion of    dependent measure is an adaptation of Definition~\ref{Talagrand-stable} above to the model theory context. Indeed,   note that every function $\phi(x,b)$ is continuous on $S_\phi(B)$ and so $D_k(A, B, E, \phi)$ is $\mu_{\phi,B}^k$-measurable:
	Although  $A$ is a potentially uncountable set, but   $D_k(A, B, E, \phi)$ is a union of the sets of the form $$E^k\cap\Big(\bigcap_{I\subset k}\{\bar p\in (S_\phi(B))^k: b_I\in A, \bigwedge_{i\in I}\phi(p_i,b_I)=0\wedge\bigwedge_{i\notin I}\phi(p_i,b_I)=1\}\Big)$$
	and the  set inside the parentheses is clopen. Now, it is easy to see that $D_k(A, B, E, \phi)$ is of the form $E^k\cap O$ where $O$ is an open set.
	 To summarize, $(\mu_{\phi,B}^k)^*(D_k(A, B, E, \phi))=\mu_{\phi,B}^k(D_k(A, B, E, \phi))$ and $(M)$\footnote{Abbreviation for measurability.} property of Proposition~4 of \cite{Talagrand} holds. 
	\newline
	(ii) Recall from Remark~\ref{remark-00}(ii) that a Keisler measure $\mu$ is dependent iff its completion $\bar\mu$ is dependent.   
	\newline
	(iii) (1) Let $A\subseteq B$. As the restriction map $r:S_x(B)\to S_x(A)$ is a quotient map, it is easy to verify that  $\mu|_A(X)=\mu|_B(r^{-1}(X))$
	for any Borel subset $X\subseteq S_x(A)$. (See Remark~1.2 in \cite{CGH}.) Now, by Proposition~465C(d) of \cite{Fremlin4}--Version of 26.8.13, if $\mu$ is dependent over $A$ and in $A$, then it is dependent over $B$ and in $A$. The converse is clear.
	(2) Let $A\subseteq B\subseteq M$. It is easy to verify that: if $\mu$ is dependent over $M$ and in $B$, then it is dependent over $M$ and in $A$. (Cf. \cite{Fremlin4}--Version of 26.8.13, Proposition~465C(a)(i).) By (1) and (2), $\mu\in{\frak M}(M)$ is dependent if and only if for any $A\subseteq B\subseteq M$, $\mu$ is dependent over $B$ and in $A$.
	\newline
	(iv) As the restriction map $r_\phi:S_x(B)\to S_\phi(B)$ is a quotient map, it is easy to verify that  $\mu_\phi(E)=\mu(r_\phi^{-1}(E))$
	for any Borel subset $E\subseteq S_\phi(B)$. (See Remark~1.2 in \cite{CGH}.) Therefore, by  Proposition~465C(d) of \cite{Fremlin4} again, if $\mu$ is dependent over $B$ and in $A$, then whenever $E\subseteq S_x(B)$ is
	measurable, $\mu|_B(E)>0$, there is some
	$k\geqslant 1$ such that $(\mu|_B)^{k}D_k(A,  B,E)<(\mu|_B E)^{k}$
	where $D_k(A,  B,E)=\big\{\overline{p}\in E^k: \ \forall I \subseteq k \ \exists b\in A \ \bigwedge_{i\in I}\phi(p_i,b)= 0 \wedge \bigwedge_{i\notin I} \phi(p_i,b)=1\big\}$.
\end{Remark}

The following is an important property of the notion dependent measure.
\begin{Proposition} \label{measurable-1}
	Let $T$ be a complete theory, $B$  a set, and $\mu\in{\frak M}_x(B)$. If $\mu$ is dependent, then for   any $A\subseteq  B$ and any $L(A)$-formula $\phi(x,y)$, every function in the pointwise closure of the convex hull of $\{\phi(x,b):S_\phi(A)\to\{0,1\}~| b\in A\}$ is  $\mu$-measurable. 
\end{Proposition}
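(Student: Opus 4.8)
The plan is to reduce the statement to Fact~\ref{closure}, using that for a family of $\{0,1\}$-valued continuous functions ``$\mu$-stability'' in the sense of Definition~\ref{Talagrand-stable} is exactly one instance of Definition~\ref{dependent measure}.

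First I would fix $A\subseteq M$ and an $L(A)$-formula $\phi(x,y)$, put $X:=S_\phi(A)$ (a compact Hausdorff space), and consider the family $\mathcal F:=\{\phi(\cdot,b):X\to\{0,1\}\mid b\in A\}$, where $\phi(\cdot,b)$ sends a type $p$ to $\phi(p,b)$. By the definition of the topology on $S_\phi(A)$ each $\phi(\cdot,b)$ is continuous, so $\mathcal F\subseteq C(X)$ and $\mathcal F$ is pointwise bounded by $1$. Let $\nu:=\mu_{\phi,A}=(\mu|_A)_\phi$ be the restriction of $\mu|_A$ to $S_\phi(A)$; being the push-forward of the regular Borel probability measure $\mu$ along the composite quotient map $\pi:S_x(M)\to S_x(A)\to S_\phi(A)$, it is a Radon probability measure on $X$, complete by the paper's convention.

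The main step is to check that $\mathcal F$ is $\nu$-stable. If it were not, there would be a measurable $E\subseteq X$ with $\nu(E)>0$ and reals $s<r$ witnessing the failure in Definition~\ref{Talagrand-stable}. Since every $f\in\mathcal F$ takes only the values $0,1$ and $s<r$, we may assume $s\in[0,1)$ and $r\in(0,1]$ (for any other choice of $s<r$ the relevant set is either empty or unchanged); then ``$f(w_i)\leqslant s$'' means ``$\phi(w_i,b)=0$'' and ``$f(w_i)\geqslant r$'' means ``$\phi(w_i,b)=1$'', so the set occurring in Definition~\ref{Talagrand-stable} is precisely $D_k(A,A,E,\phi)$ of Definition~\ref{dependent measure}, which is Borel (hence $\nu^k$-measurable) since each $\phi(\cdot,b)$ is continuous, as in Remark~\ref{remark-1}(i). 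Thus $\nu^k D_k(A,A,E,\phi)=(\nu E)^k$ for all $k$, i.e. $\mu$ is not dependent over $A$ and in $A$, contradicting the hypothesis (apply Definition~\ref{dependent measure}(ii) with $B=A$). Hence $\mathcal F$ is $\nu$-stable, and Fact~\ref{closure} gives that every function $g$ in the pointwise closure of $\mathcal F$ is $\nu$-measurable, i.e. $\mu_{\phi,A}$-measurable; since $\pi$ is continuous and $\pi_*\mu=\mu_{\phi,A}$, it follows that the induced function $g\circ\pi$ on $S_x(M)$ is $\mu$-measurable, which is the conclusion sought.

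I do not expect a genuine obstacle: the content is the dictionary between the measure-theoretic Definition~\ref{Talagrand-stable} and the model-theoretic Definition~\ref{dependent measure}. The points needing care are that $\nu=\mu_{\phi,A}$ really is a complete Radon probability measure on a compact Hausdorff space, so that Fact~\ref{closure} is applicable (Remark~\ref{remark-00}(ii)/Remark~\ref{remark-1}(ii) defuse the completeness concern), and that for a $\{0,1\}$-valued family the threshold condition with arbitrary $s<r$ genuinely collapses to the equalities ``$\phi(w_i,b)=0$'', resp. ``$=1$'', defining $D_k$, so that no exotic pair $s<r$ produces a spurious witness to instability.
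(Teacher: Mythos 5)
Your proof is correct and follows essentially the same route as the paper: reduce to Fact~\ref{closure} by checking that $\{\phi(\cdot,b):b\in A\}$ is $\mu_{\phi,A}$-stable, which the paper does in one line while delegating the dictionary between Definition~\ref{Talagrand-stable} and Definition~\ref{dependent measure} to Remark~\ref{remark-1}(i) and the completeness convention. Your explicit verification that any witnessing pair $s<r$ for a $\{0,1\}$-valued family collapses to the set $D_k(A,A,E,\phi)$ is exactly the content the paper leaves implicit, so there is no substantive difference.
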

\begin{proof} This is a  consequence of 	Facts~\ref{closure} and \ref{fundamental}. Indeed, recall that $\mu$ is 	dependent iff its completion $\bar\mu$ is. Now, by the above convention,   $\mu_{\phi,A}$ (is complete and) satisfies in the assumptions of Facts~\ref{closure} and \ref{fundamental}. 										
\end{proof}

The concept of $NIP$ (non independence property)  for formulas/functions was introduced by Shelah in the 1970s, which was also independently introduced and studied by Vapnik and Chervonenkis in learning theory. (Cf. \cite{Simon}, for the  definition of $NIP$ for a theory/formula.) In Section~5, a restricted version of this concept is studied. On the other hand, a weakened version of NIP for models/sets has recently been introduced by Khanaki and Pillay, namely `NIP in a model.'
Recall from \cite[Def.~1.1]{KP} that a formula $\phi(x,y)$ has  $NIP$ in a model $M$ if   there is {\bf no} countably infinite sequence $(a_i)\in M$ such that for   all finite disjoint subsets $I, J\subseteq \Bbb N$, $M\models \exists y(\bigwedge_{i\in I}\phi(a_i,y)\wedge\bigwedge_{i\in J}\neg\phi(a_i,y))$.
The formula $\phi$ has $NIP$ for the theory $T$ iff it has $NIP$ in every model $M$ of $T$ iff it has $NIP$ in some model $M$ of $T$ in which all types over the empty set in countably many variables are realised.

\begin{Proposition}  \label{type=dependent}
		(i) For any theory, every type is dependent.\footnote{The notion of a dependent type in Definition~1.5 of paper \cite{GOU} is different from the notion of dependence in the present paper, and the reader should be aware of and pay close attention to this distinction.}
	\newline 
	(ii) For any theory $T$ and any model $M$ of $T$, if every Keisler  measure over $M$  is dependent, then every formula   has  $NIP$ in    $M$.  
	\newline
	(iii) A  theory $T$ is  $NIP$ iff in any model $M$ of $T$, every Keisler  measure over $M$  is dependent iff every Keisler  measure over some model $M$ of $T$ in which all types over the empty set in countably many variables are realised is dependent.	
\end{Proposition}
\begin{proof} (i) is evident, by definition. (See also Theorem~5 of \cite{Talagrand}.)

	(ii): By Proposition~\ref{measurable-1}, for any model  $M$ and any formula $\phi(x,y)$, every function in the pointwise closure of $\{\phi(x,b):S_\phi(M)\to\{0,1\}~| b\in M\}$ is measurable with respect to  {\bf any} Keisler measure over $M$.
	By the equivalence (iv)~$\iff$~(vi) of Theorem~2F of \cite{BFT}, $\phi(x,y)$ has $NIP$ in $M$.
	
	(iii): It is easy to see that, for any formal $\phi(x,y)$, if $\phi(x,y)$ has $NIP$, then for some $k=k_\phi$ we have $D_k(A, B,E,\phi)=\emptyset$.  (Cf. Definition~\ref{dependent measure}.)   Conversely, if for any model $M$ of $T$, every measure over $M$ is dependent, then by (ii) every formula has $NIP$ in any model of $T$. By Remark~2.1 of \cite{KP}, this means that $\phi$ has $NIP$ for the theory $T$.
It can easily be verified that in the above equivalence, it is sufficient to consider some model $M$ of $T$ in which all types over the empty set in countably many variables are realised.
\end{proof}

\begin{Example} \label{example 0}
(i) By Proposition~\ref{type=dependent} above, all types in any theory, and all measures in $NIP$ theories are dependent.
\newline
(ii) We say that a measure $\mu$ is {\em purely atomic} if there are Dirac measures $(\delta_n:n<\omega)$ such that $\mu=\sum_{1}^{\infty}r_n.\delta_n$ where $r_n\in[0,1]$ and $\sum_{1}^{\infty}r_n=1$.  By definition, it is easy to verify that any   purely atomic  measure is dependent (cf. also Theorem~5 of \cite{Talagrand}). In \cite{CG}, a measure $\mu$ is called trivial, if (1) it is purely atomic (i.e. $\mu=\sum_{1}^{\infty}r_n.\delta_n$) , and (2) any $\delta_n$ is realized in $\cal U$, i.e. $\delta_n=tp(a_n/{\cal U})$ for some $a_n\in\cal U$.  It is shown  \cite[Theorem 4.9]{CG} that, in the theories of the random graph and the random bipartite graph, every definable and finitely satisfiable measure is trivial. This means that such measures are dependent. (Similarly, all definable and finitely satisfiable measures in the theories in \cite[Corollary 4.10]{CG} are dependent.)
\newline
(iii) Furthermore, in Proposition~\ref{fim is dependent} below, we show that any $fim$ measure (in any theory)  is dependent.
\end{Example}

 Recall from Proposition~\ref{type=dependent} that, in any non-$NIP$ theory, there is  a measure  that is not dependent.
In the following example we give a concrete example of a non-dependent measure. In Example~\ref{non-example} we will present another example.
\begin{Example} \label{non-example 2} We review the last example of \cite[Example~7.2]{Simon} and show  that this is a non-dependent measure. Let $T$ be the theory of the random graph in the language $\{R\}$. Let $M\models T$ be the unique countable model. Define $\mu$ on $M$ by $\mu\big(\bigcap_{i<n}(xRa_i)^{\eta(i)}\big)=2^{-n}$ for any choice of pairwise distinct $a_i$'s in $M$ and $\eta:n\to\{0,1\}$. It is not hard to see that $\mu$ is $\emptyset$-definable. We denote  the global definable extension of $\mu$ again by $\mu$.
 We claim that the global measure $\mu$ is not dependent. To check this, for each $k$,  we consider the following formulas:
$$\Phi_1:=\bigwedge_
{i\neq j, i,j\leq k}x_i\neq x_j,
$$
$$\Phi_2:=\forall I\subset k \ \exists y_I\big(\bigwedge_{i\in I}(x_i R y_I) \wedge\bigwedge_{i\notin I}\neg (x_i R y_I)\big).$$
Set $\Phi(x_1,\ldots,x_k):=\neg\Phi_1\vee\Phi_2$. Notice that, by the randomness, $\forall\bar x\Phi(\bar x)\equiv\top$ and so $\mu^{(k)}(\Phi)=1$. Also, an easy computation shows that $\mu^{(k)}(\Phi_1)=1$.  Therefore $\mu^{(k)}(\Phi_2)=1$. Set $D_k:=\{(p_1,\ldots,p_k)\in(S_{x_1}({\cal U}))^k: p|_{x_i}=p_i \  (i\leq k) \text{ for some } p\in S_{x_1,\ldots,x_k}({\cal U}) \text{ with } \Phi_2\in p\}$.
Notice that $D_k=D_k({\cal U},{\cal U},S_x({\cal U}))$ as in
Remark~\ref{remark-1}(iv). Also, similar to Remark~\ref{remark-1}(i)
one can check that $D_k$
is $\mu^k$-measurable.

On the other hand, by Proposition 3.3 of \cite{GH}, the restriction map $r:S_{x_1,\ldots,x_k}({\cal U})\to S_{x_1}({\cal U})^k$ via $p\mapsto (p_1,\ldots,p_k)$ where 
$p|_{x_i}=p_i$   ($i\leq k$), is a quotient map, and the {\em pushforward} of $\mu^{(k)}$ is $\mu^k$; that is, $\mu^{(k)}(r^{-1}(D))=\mu^k(D)$ for any $\mu^k$-measurable set $D\subseteq
S_{x_1}({\cal U})^k$.
 Therefore, $\mu^k(D_k)=
 \mu^{(k)}(r^{-1}(D_k))=
 \mu^{(k)}(\Phi_2)=1$.
 As $k$ is arbitrary, the measure $\mu$ is not dependent for the formula $xRy$.

Furthermore, as the only place where  the definition of $\mu$ is used is in the computation of $\mu^{(k)}(\Phi_1)$, the above argument  suggests that, in this theory, every non-trivial measure is non-dependent.

 Also, it is easy to check that $\mu$ is not finitely satisfiable in any small model. Alternatively, Conant and Gannon  \cite[Thm~4.9]{CG}	showed that every both definable and finitely satisfiable measure in $T$ is trivial, which the measure in this example clearly is not.
 On the other hand, it is shown in Fact 2.5 of \cite{GH}  that $\mu$ commutes with itself.
\end{Example}

\section{Dependence and symmetry}	
In this section we generalize some results of \cite{HPS} on the Morley products of measures and  symmetric measures.
The following is the fundamental property of the notion of dependent measure.
\begin{Theorem} \label{almost fim}
Let $T$ be a complete theory, $A$  a small set, and $\mu_x\in{\frak M}({\cal U})$. If $\mu|_A$ is dependent, then for any $\mu|_A$-measurable subsets $X_1,\ldots,X_m\subseteq S_x(A)$ and $\epsilon>0$, there are $n\in{\Bbb N}$ and $E\subseteq (S_x({\cal U}))^n$, with $(\mu^n)^* E\geq1-\epsilon$, such that for every $b\in A$ and $k\leq m$, $$\big|\mu(\phi(x,b)\cap X_k)-Av(p_1|_A,\ldots,p_n|_A; \phi(x,b)\cap X_k)\big|\leq\epsilon, \ \ \ (*)$$
for all $(p_1,\ldots,p_n)\in E$. (Here $p_i|_A$ is the restriction of $p_i$ to $S_x(A)$.)
\end{Theorem}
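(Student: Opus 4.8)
The plan is to reduce the statement to a direct application of Fact~\ref{fundamental}, the Glivenko--Cantelli characterization of $\mu$-stable families. The key observation is that the collection of functions on $X := S_x(\cal U)$ that appear on the left-hand side of $(*)$ — namely the functions $q \mapsto \mathbf{1}_{\phi(x,b) \cap X_k}(q)$ as $b$ ranges over $M$ and $k$ ranges over $1,\dots,m$ — need only be related, up to the measure-theoretic issues, to a $\mu$-stable family of \emph{continuous} functions. The family $\{\phi(x,b) : b \in M\}$ (viewed as $\{0,1\}$-valued continuous functions on $S_x(\cal U)$, or rather their restrictions) is $\mu|_M$-stable precisely because $\mu|_M$ is dependent; this is the content of Definition~\ref{dependent measure} together with Remark~\ref{remark-1}. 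So first I would fix the formula $\phi$ and pass to $S_\phi(M)$, using Remark~\ref{remark-1}(iii)--(iv) to transfer dependence and measures between $S_x(M)$, $S_x(\cal U)$ and $S_\phi(M)$, and the fact that $\mu(r_\phi^{-1}(E)) = \mu_\phi(E)$ so that the value of $\phi(x,b)$ on a type only depends on its $\phi$-reduct.

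Next I would handle the sets $X_k$. Each $X_k$ is $\mu|_M$-measurable, so $\mathbf{1}_{X_k}$ is a $\mu|_M$-measurable function, but not continuous. The point is that multiplying a $\mu$-stable family by a fixed measurable function, or more precisely considering the family $\{\phi(x,b)\cdot \mathbf{1}_{X_k} : b \in M, k \le m\}$, preserves the Glivenko--Cantelli property: this follows because $\mu$-stability is stable under such operations (it is a property about oscillation patterns of the $\phi(x,b)$'s, and intersecting each cell with a fixed measurable set cannot create new oscillation patterns), and then Fact~\ref{fundamental}(iii) applies to the enlarged finite-union family $A' := \bigcup_{k\le m}\{\phi(x,b)\cdot\mathbf{1}_{X_k} : b\in M\}$, which remains $\mu|_M$-stable since a finite union of stable families is stable. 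Alternatively — and this may be cleaner — I would invoke Fact~\ref{fundamental}(iii) directly for the family $\{\phi(x,b) : b\in M\}$ applied to the measure $\mu|_M$ restricted/conditioned appropriately, but the finite-union trick is the safest route since $m$ is finite.

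Having established that $A'$ is $\mu|_M$-stable (equivalently Glivenko--Cantelli), Fact~\ref{fundamental}(iii) gives that for $\mu|_M$-almost all $\bar{q} = (q_1, q_2, \dots) \in (S_x(M))^{\Bbb N}$,
\[
\lim_{n\to\infty}\ \sup_{b\in M,\, k\le m}\ \Big| \tfrac{1}{n}\sum_{i=1}^n \mathbf{1}_{\phi(x,b)\cap X_k}(q_i) - \mu|_M(\phi(x,b)\cap X_k) \Big| = 0.
\]
Since this convergence is uniform in $b$ and $k$, for each $\epsilon > 0$ there is $n$ and a set of positive-measure-close-to-one sequences along which the $n$-th partial average is already within $\epsilon$. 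I would then lift this back to $(S_x(\cal U))^n$: take $E \subseteq (S_x(\cal U))^n$ to be the preimage under the restriction map $(p_1,\dots,p_n)\mapsto(p_1|_M,\dots,p_n|_M)$ of the good set in $(S_x(M))^n$; by the product-measure version of Remark~\ref{remark-1}(iii) (the restriction map is a quotient map, so $\mu^n(r^{-1}(\cdot)) = (\mu|_M)^n(\cdot)$) we get $(\mu^n)^* E \ge 1-\epsilon$, and the inequality $(*)$ holds on $E$ because $\phi(x,b)$ and $X_k$ only see the $M$-reduct, and $Av(p_1|_M,\dots,p_n|_M; \phi(x,b)\cap X_k)$ is exactly the $n$-th partial average above with $q_i = p_i|_M$. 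Finally $\mu(\phi(x,b)\cap X_k) = \mu|_M(\phi(x,b)\cap X_k)$ by the quotient relation.

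\textbf{Main obstacle.} The delicate point I expect to be the crux is the passage from the $\mu|_M$-stability of the continuous family $\{\phi(x,b) : b \in M\}$ to a Glivenko--Cantelli statement for the \emph{non-continuous} functions $\mathbf{1}_{\phi(x,b)\cap X_k}$. Fact~\ref{fundamental} as stated requires $A \subseteq C(X)$, so one must argue carefully that intersecting with the fixed measurable sets $X_k$ (and taking a finite union over $k$) keeps us inside the scope of the Glivenko--Cantelli / stability machinery — either by directly checking that the relevant oscillation condition of Definition~\ref{Talagrand-stable} is inherited, or by citing the appropriate stability-of-stable-classes result from \cite{Fremlin4} (the $\mu$-stable classes form a band-like family closed under the relevant operations; cf.\ Proposition~465C of \cite{Fremlin4}). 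Getting the measurability bookkeeping right — that $D_k$-type sets remain measurable and that outer measure on $(S_x(\cal U))^n$ behaves correctly under the quotient — is routine but must be stated precisely, relying on the standing completeness convention.
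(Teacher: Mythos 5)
Your proposal is correct and follows essentially the same route as the paper's proof: both pass from dependence of $\mu|_M$ to $\mu|_M$-stability of the finite union $\bigcup_{k\le m}\{\phi(x,b)\cdot\chi_{X_k}:b\in M\}$ via the closure properties in Proposition~465C of \cite{Fremlin4}, then apply the Glivenko--Cantelli equivalence of Fact~\ref{fundamental} to extract $n$ and a large set in $(S_x(M))^n$, and finally lift to $(S_x({\cal U}))^n$ through the restriction quotient map as in Remark~\ref{remark-1}(iii). The "main obstacle" you flag (non-continuity of $\chi_{X_k}$) is resolved exactly as you suspect, by citing 465C(a)(i) and (b)(ii), which is what the paper does.
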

\begin{proof}
 First note that, by   Remark~\ref{remark-1}(iv), we can use $\mu|_A$ instead of $\mu_{\phi,A}$ (in Definition~\ref{dependent measure}). 
As $\mu|_A$ is dependent  (equivalently the set $\{\phi(x,b):S_x(A)\to \{0,1\}~|b\in A\}$ is $\mu|_A$-stable in the sense of Definition~\ref{Talagrand-stable}),  by Proposition~465C(a)(v) and (b)(ii) of \cite{Fremlin4}--Version of 26.8.13, the set $\bigcup_{k=1}^m\{\phi(x,b)\times \chi_{X_k}:b\in A\}$ is $\mu|_A$-stable, where $\chi_{X_k}$ is the characteristic function of $X_k$. Therefore,  by Fact~\ref{fundamental},  we have  $$\sup_{b\in  A}|\frac{1}{n}\sum_1^n\phi(p_i,b)\times\chi_{X_k}-\mu(\phi(x,b)\cap\chi_{X_k})|\to 0$$ as $n\to\infty$ for all $k\leq m$ and for almost every $(p_i)\in S_x(A)^{\Bbb N}$ with the product measure $(\mu|_A)^{\Bbb N}$.    Now, it is easy to verify that the claim holds. Indeed, one can  see directly (or using Theorem 11-1-1(c) of \cite{T84})  that there are $n\in{\Bbb N}$ and $F\subseteq (S_x(A))^n$, with $(\mu|_A ^n)^* F\geq1-\epsilon$, such that for every $b\in A$ and $k\leq m$,  $\big|\mu(\phi(x,b)\cap X_k)-Av(p_1',\ldots,p_n'; \phi(x,b)\cap X_k)\big|\leq\epsilon$ for all $(p_i')\in F$. 
Finally, use  Remark~\ref{remark-1}(iii) above  and find the desired set $E\subseteq (S_x({\cal U}))^n$ such that $(*)$ holds. (Here, $p_i'=p_i|_A$ for some $p_i\in S_x({\cal U})$.)
\end{proof}

In the following, for $\mu\in{\frak M}({\cal U})$,  the support of $\mu$ is denoted by $Supp(\mu)$. (Cf. \cite[p. 99]{Simon}.)
\begin{Corollary} \label{cor 1}
	Let $T$ be a complete theory, $A$  a small set, and $\mu\in{\frak M}_x({\cal U})$. If $\mu|_A$ is dependent, then for any $\mu|_A$-measurable subsets $X_1,\ldots,X_m\subseteq S_x(A)$ and $\epsilon>0$, there are  $p_1|_A,\ldots,p_n|_A\in S_x(A)$  such that for every $b\in A$ and $k\leq m$, $$\big|\mu(\phi(x,b)\cap X_k)-Av(p_1|_A,\ldots,p_n|_A; \phi(x,b)\cap X_k)\big|\leq\epsilon.$$
	Furthermore, we can assume that $p_i\in Supp(\mu)$ for all $i$.
\end{Corollary}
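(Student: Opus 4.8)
The plan is to derive this corollary directly from Theorem~\ref{almost fim} by observing that the set $E$ produced there is nonempty, plus a routine refinement to land inside the support. We may assume $\epsilon<1$, since for $\epsilon\geq 1$ the inequality is vacuous. First I would apply Theorem~\ref{almost fim} to the given $X_1,\ldots,X_m$ and $\epsilon$, obtaining $n\in{\Bbb N}$ and $E\subseteq (S_x({\cal U}))^n$ with $(\mu^n)^*E\geq 1-\epsilon>0$ such that the inequality $(*)$ of Theorem~\ref{almost fim} holds for every $(p_1,\ldots,p_n)\in E$. Since $(\mu^n)^*E>0$ forces $E\neq\emptyset$, it suffices to pick any $(p_1,\ldots,p_n)\in E$ and pass to the restrictions $p_1|_M,\ldots,p_n|_M\in S_x(M)$; the asserted inequality is exactly $(*)$ for this tuple.

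For the ``furthermore'' clause I would cut $E$ down to the tuples all of whose coordinates lie in $S(\mu)$. Recall that the support $S(\mu)$ is a closed subset of $S_x({\cal U})$ with $\mu(S(\mu))=1$ (see \cite[p.~99]{Simon}), so the product $(S(\mu))^n$ has full $\mu^n$-measure, i.e. its complement in $(S_x({\cal U}))^n$ is $\mu^n$-null. Intersecting with a conull set does not change outer measure, hence $(\mu^n)^*\bigl(E\cap(S(\mu))^n\bigr)=(\mu^n)^*E\geq 1-\epsilon>0$, so $E\cap(S(\mu))^n\neq\emptyset$. Choosing $(p_1,\ldots,p_n)$ in this intersection yields a tuple with $p_i\in S(\mu)$ for all $i$ for which $(*)$ still holds, which is precisely the strengthened conclusion.

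The only point needing a word of care is the measure-theoretic fact that $(S(\mu))^n$ is $\mu^n$-conull and that outer measure is therefore unaffected by intersecting with it; both are standard properties of Radon (hence, by our convention, complete) measures and their products, so I do not expect any genuine obstacle. Indeed the corollary is essentially just the remark that the quantitative conclusion of Theorem~\ref{almost fim}, which a priori speaks of a positive-outer-measure set of witnessing tuples, in particular produces a single witnessing tuple, and can be arranged to do so inside $S(\mu)$.
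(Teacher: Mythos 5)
Your proposal is correct and follows essentially the same route as the paper: the paper's proof is simply ``Immediate, by Theorem~\ref{almost fim}'', together with the citation of $\mu(S(\mu))=1$ from \cite[Pro.~2.10]{Gannon-thesis} to justify placing the $p_i$ in the support. You have merely made explicit the two small points the paper leaves implicit (that positive outer measure forces $E\neq\emptyset$, and that intersecting with the conull set $(S(\mu))^n$ preserves positive outer measure), which is a faithful expansion rather than a different argument.
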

\begin{proof}
	Immediate, by Theorem~\ref{almost fim}. (Recall from  \cite[Proposition~2.10]{Gannon-thesis} that $\mu(Supp(\mu))=1$. This   assures us that we can assume that $p_i\in Supp(\mu)$ for all $i$.)
\end{proof}

The following result allows us to define the Morley product of a finitely satisfiable measure and a dependent measure.
\begin{Proposition} \label{measurable}
	Let $\mu_x$ be a global
	 $A$-finitely satisfied measure, $\lambda_y$ a global dependent measure and  $\phi(x,y;b)$ an $L({\cal U})$-formula.
	 Let $N\supseteq Ab$ be a model and define
the function $f:S_{\phi^*}(N)\to[0,1]$,\footnote{Recall that $\phi^*(y,x;b)=\phi(x,y;b)$.}  by $q\mapsto \mu(\phi(x,d;b))$ for some (any) $d\models q$. Then $f$ is $\lambda_y|_N$-measurable.
\end{Proposition}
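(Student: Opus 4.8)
The plan is to realise $f$ as a member of the pointwise closure of the convex hull of a uniformly bounded, $\lambda_y|_N$-stable family of continuous functions on $X:=S_{\phi^*}(N)$, and then to invoke Facts~\ref{fundamental} and~\ref{closure}. Set
\[
A_0 \ := \ \bigl\{\, q\mapsto \phi(a,q;b) \ :\ a\in M \,\bigr\}\ \subseteq\ C(X),
\]
where $\phi(a,q;b)=1$ iff $\phi(a,y;b)\in q$; since $b\in N$ each of these is a continuous $\{0,1\}$-valued function on $X$, so $A_0$ is uniformly bounded. It then suffices to check: (a) $A_0$ is $\lambda_y|_N$-stable; and (b) $f$ lies in the pointwise closure of the convex hull $\Gamma(A_0)$. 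Granting these, Fact~\ref{fundamental}~(i)$\Rightarrow$(ii) gives that $\Gamma(A_0)$ is $\lambda_y|_N$-stable, and Fact~\ref{closure} — applicable because, by the standing convention, $\lambda_y|_N$ is a complete Radon probability measure — then yields that every function in the pointwise closure of $\Gamma(A_0)$, in particular $f$, is $\lambda_y|_N$-measurable.

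For (a): since $b\in N$, the formula $\phi(x,y;b)$ can be handled over $N$ (if one insists on parameter-free formulas in Definition~\ref{dependent measure}, absorb $b$ by passing to the $L$-formula $\chi(y,xz):=\phi(x,y;z)$ and restricting attention to its instances whose $z$-coordinate equals $b\in N$). Since $\lambda$ is dependent, it is in particular dependent over $N$ and in $M$; comparing Definition~\ref{dependent measure} with Definition~\ref{Talagrand-stable} as in Remark~\ref{remark-1}(i), and using Remark~\ref{remark-1}(iii),(iv) to move between $S_y(N)$, $S_{\phi^*}(N)$ and the corresponding restrictions of $\lambda|_N$ (and the trivial observation that a subfamily of a $\mu$-stable family is $\mu$-stable), this is exactly the statement that $A_0$ is $\lambda_y|_N$-stable.

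For (b): first, $f$ is well defined, because if $d,d'\models q$ then $d\equiv_{\phi^*,N}d'$, hence $\phi(a,d;b)\Leftrightarrow\phi(a,d';b)$ for all $a\in N$; approximating $\mu$ (finitely satisfiable in $M$) in the weak topology on ${\frak M}_x({\cal U})$ by convex combinations $\nu=\sum_{i=1}^n r_i\delta_{a_i}$ with $a_i\in M$ (Fact~\ref{Dirac}(iv)), the value $\nu(\phi(x,d;b))=\sum_i r_i\phi(a_i,d;b)$ is unchanged on replacing $d$ by $d'$, so $\mu(\phi(x,d;b))=\mu(\phi(x,d';b))$. The same approximation, applied to finitely many realisations at once, gives $f\in\overline{\Gamma(A_0)}$: given $q_1,\dots,q_m\in X$ and $\varepsilon>0$, pick $d_j\models q_j$ and choose $\nu=\sum_{i=1}^n r_i\delta_{a_i}$ ($a_i\in M$) with $|\mu(\phi(x,d_j;b))-\nu(\phi(x,d_j;b))|\le\varepsilon$ for all $j\le m$; then $g:=\sum_{i=1}^n r_i\,\bigl(q\mapsto\phi(a_i,q;b)\bigr)\in\Gamma(A_0)$ and $g(q_j)=\nu(\phi(x,d_j;b))$, so $|f(q_j)-g(q_j)|\le\varepsilon$ for every $j$.

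The genuine measure-theoretic content is entirely outsourced to Facts~\ref{fundamental} and~\ref{closure}. I expect the main thing needing care to be step (a): identifying precisely which stability statement, for which family on which type space, is delivered by the dependence of $\lambda$, while keeping the parameter $b$ in its place; and, as always with Fact~\ref{closure}, the essential use of completeness of $\lambda_y|_N$.
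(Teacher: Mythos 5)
Your proposal is correct and takes essentially the same route as the paper: the paper likewise observes that finite satisfiability of $\mu$ in $M$ places $f$ in the pointwise closure of the convex hull of $\{\phi(a,y;b): a\in M\}$ (citing Fact~\ref{Dirac}(v), which you re-derive from Fact~\ref{Dirac}(iv)), and then applies Facts~\ref{fundamental} and~\ref{closure} together with the completeness convention. Your extra care about the parameter $b$ and the passage between type spaces only makes explicit what the paper leaves implicit.
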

\begin{proof}
	As $\mu$ is  $A$-finitely satisfied, by Fact~\ref{Dirac}(v), $f$ is in the closure of the convex hull of $\{\phi(a,y;b):S_{\phi^*}(N)\to\{0,1\}~|a\in A\}$.
 Now, as  $\lambda_y$ is dependent, by  Proposition~\ref{measurable-1}    above, $f$ is $\lambda_y|_N$-measurable.
  (Indeed, recall that $\lambda_y$ is a complete measure, with the above convention.)
\end{proof}

\begin{Definition} \label{Morley product}
 Under  the assumptions of Proposition~\ref{measurable} we define the Morley product measure $\mu(x)\otimes\lambda(y)$ as follows:
	$$\mu(x)\otimes\lambda(y)(\phi(x,y;b))=\int_{S_{\phi^*}(N)}f~ d\lambda|_N.$$
	It is easy to verify that the definition does not depend on the choice of $N$. We will sometimes write $f$ as $f_{\mu}^\phi$  (or $f_{\mu,N}^\phi$) to emphasize that it is related to $\mu$, $\phi$ (and $N$) as above. 
\end{Definition}

The following is a generalization of Lemma~7.1 of \cite{Simon}, although the proof is essentially the same, using the previous observations.
\begin{Lemma} \label{type-measure}
	Let $\mu_x,\lambda_y$ be  global dependent measures such that   $\mu_x$ is $A$-finitely satisfied (or Borel definable over $A$) and  $\lambda_y$ is  $A$-finitely satisfied. If $\mu_x\otimes  q_y=q_y\otimes\mu_x$ for any $q_y\in S_y({\cal U})$ in the support of $\lambda_y$, then $\mu_x\otimes\lambda_y=\lambda_y\otimes\mu_x$.
\end{Lemma}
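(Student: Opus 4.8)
The plan is to follow the scheme of Lemma~7.1 of \cite{Simon}, using the integral representation of the Morley product together with the measurability facts we have just established. Fix an $L({\cal U})$-formula $\phi(x,y;b)$; choose a model $N\supseteq Mb$ over which everything in sight is defined. I would first write both sides of the desired equation as integrals against $\lambda$. On one side, by Definition~\ref{Morley product} we have
$$\mu_x\otimes\lambda_y(\phi(x,y;b))=\int_{S_{\phi^*}(N)} f_{\mu}^\phi\, d\lambda|_N,$$
where $f_{\mu}^\phi(q)=\mu(\phi(x,d;b))$ for $d\models q$; this is well-defined and $\lambda|_N$-measurable by Proposition~\ref{measurable}. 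For the other side, since $\mu_x$ is $M$-finitely satisfied (or Borel definable over $M$) and $\lambda_y$ is $M$-finitely satisfied and dependent, $\lambda_y\otimes\mu_x$ makes sense by Proposition~\ref{measurable} with the roles reversed, and I would express $\lambda_y\otimes\mu_x(\phi(x,y;b))$ as an integral of $q\mapsto(\lambda_y\otimes q_x)(\phi(x,y;b))=(q_x\otimes\lambda_y)(\phi)$ — here using the hypothesis that $\mu_x$ commutes with each type in the support of $\lambda$ — over $S(\mu)$ against $\mu$. The combinatorial heart is to show these two integrals agree.

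The cleanest route is via finite approximation. By Corollary~\ref{cor 1} applied to $\lambda_y$ (which is dependent), for every $\epsilon>0$ there are types $q_1,\dots,q_n\in S(\lambda)$ such that, uniformly over the relevant definable-set data, $\lambda$ is $\epsilon$-approximated by $\mathrm{Av}(q_1,\dots,q_n)$; in particular
$$\Big|\,\mu_x\otimes\lambda_y(\phi(x,y;b)) - \frac{1}{n}\sum_{i=1}^n \mu_x\otimes q_i(\phi(x,y;b))\,\Big|\leq\epsilon.$$
The point is that the function being averaged, $q\mapsto f_\mu^\phi(q)$, is in the pointwise closure of the convex hull of the continuous functions $\phi(a,y;b)$ (Fact~\ref{Dirac}(v)), hence the integral against $\lambda$ and the integral against the average measure $\mathrm{Av}(q_1,\dots,q_n)$ differ by at most $\epsilon$ — this is exactly what Corollary~\ref{cor 1} (equivalently Theorem~\ref{almost fim}, or Fact~\ref{fundamental}) buys us, since on a $\mu$-stable family pointwise convergence controls convergence in measure (Fact~\ref{convergence in measure}). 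Now on each individual type $q_i$ we invoke the hypothesis $\mu_x\otimes q_i = q_i\otimes\mu_x$, so $\mu_x\otimes q_i(\phi) = q_i\otimes\mu_x(\phi)$, and then re-assemble: since $\mu_x$ is $M$-finitely satisfied and dependent, I would run the same finite-approximation argument on the $\mu$ side to get
$$\Big|\,\lambda_y\otimes\mu_x(\phi(x,y;b)) - \frac{1}{n}\sum_{i=1}^n q_i\otimes\mu_x(\phi(x,y;b))\,\Big|\leq\epsilon,$$
using again Corollary~\ref{cor 1}, now for $\mu_x$, together with Borel definability (or finite satisfiability) of $\mu_x$ to guarantee the relevant function is measurable and approximable. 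Combining the two displays with the per-type commutation gives
$$\big|\,\mu_x\otimes\lambda_y(\phi(x,y;b)) - \lambda_y\otimes\mu_x(\phi(x,y;b))\,\big|\leq 2\epsilon,$$
and letting $\epsilon\to 0$ yields equality on every formula $\phi(x,y;b)$, hence $\mu_x\otimes\lambda_y=\lambda_y\otimes\mu_x$.

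\textbf{Main obstacle.} The delicate point is the bookkeeping around Fubini and the non-symmetric roles of $\mu$ and $\lambda$: I must be careful that in writing $\lambda_y\otimes\mu_x(\phi)$ as an average over types in $S(\mu)$ I am legitimately integrating a $\mu$-measurable function, and that the finite approximation furnished by Corollary~\ref{cor 1} can be chosen to work \emph{simultaneously} for the finitely many relevant measurable sets / formulas on both the $\mu$ and the $\lambda$ side with the same $n$ and the same tuples — this is where one uses that Corollary~\ref{cor 1} allows finitely many measurable sets $X_1,\dots,X_m$ at once. A secondary subtlety is ensuring that the approximating types can be taken in $S(\lambda)$ (resp. $S(\mu)$), since the commutation hypothesis is only assumed for $q_y$ in the support of $\lambda_y$; this is precisely the "furthermore" clause of Corollary~\ref{cor 1}, so it causes no real trouble, but it must be invoked explicitly. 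Everything else is a routine $\epsilon$-chase once the measurability and approximation inputs are in place.
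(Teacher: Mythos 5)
Your proposal follows essentially the same route as the paper's proof: approximate $f_\mu^\phi$ by a simple function, use Corollary~\ref{cor 1} to replace $\lambda$ by an average $\tilde\lambda$ of types from $S(\lambda)$ simultaneously on the finitely many relevant measurable sets and on all instances of $\phi^*$, commute $\mu$ past each $q_i$ by the hypothesis, and then undo the approximation. One local correction: your second display is justified not by ``applying Corollary~\ref{cor 1} to $\mu_x$'' (which would replace $\mu$ by an average of types, a quantity you are not comparing), but by the fact that the same $\tilde\lambda$ already satisfies $|\tilde\lambda(\phi(a,y;b))-\lambda(\phi(a,y;b))|<\epsilon$ for all $a\in{\cal U}$, so that $f_{\tilde\lambda}^{\phi^*}$ and $f_{\lambda}^{\phi^*}$ are uniformly within $\epsilon$ and hence their integrals against $\mu$ differ by at most $\epsilon$.
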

\begin{proof}
	 Let $\phi(x,y;b)\in L({\cal U})$ and $N\supseteq Ab$ a model. Let $f=f_{\mu,N}^\phi$ be as above. As $\mu_x$ is $A$-finitely satisfied  and $\lambda_y$ is dependent (or just $\mu_x$ is Borel definable over $A$), the Morley product $\mu\otimes\lambda$ is well-defined. (Cf. Definition~\ref{Morley product} and Proposition~\ref{measurable}.)
	Fix $\epsilon>0$. Let $\sum_1^n r_i. \chi_{X_i}$ be a simple $\lambda|_N$-measurable function  such that $|f(q)-\sum_1^n r_i. \chi_{X_i}(q)|<\epsilon$ for all $q\in S_{\phi^*}(N)$. (That is,  $X_1,\ldots,X_n\in S_{\phi^*}(N)$ are $\lambda|_N$-measurable, $\chi_{X_i}$ is the characteristic function of $X_i$, and $r_i\in[0,1]$ for $i\leq n$.) By Corollary~\ref{cor 1}, there are $q_1,\ldots,q_m\in Supp(\lambda)$  such that if ${\tilde\lambda}=\frac{1}{m}\sum q_i$ then 
	
	(1) $|{\tilde\lambda}(X_i)-\lambda(X_i)|<\epsilon$ for all $i\leq n$, and 
	
	(2) $|{\tilde\lambda}(\phi(a,y;b))-\lambda(\phi(a,y;b))|<\epsilon$ for all $a\in\cal U$.
	
(Here, we let $r^{-1}(X_i):=X_i$ again, where  $r:S_{\phi^*}({\cal U})\to S_{\phi^*}(N)$ is the restriction map.)   Note that,  as the $q_i$'s are types, the product $\mu\otimes\tilde\lambda$ is well-defined.
We remained the reader  that the $q_i$'s are $A$-finitely satisfied because of the assumption that $\lambda$ is, and since they are in the support of $\lambda$.
  The product measure ${\tilde\lambda}\otimes\mu$ is well-defined since $\mu$ is dependent and  the $q_i$'s are $A$-finitely satisfied. As $\mu$ commutes with $\tilde\lambda$ and $\epsilon$ is arbitrary, by the conditions (1),(2), it is easy to see that  $\mu_x\otimes\lambda_y(\phi(x,y;b))=\lambda_y\otimes\mu_x(\phi(x,y;b))$.
\end{proof}
The above argument can be further visualized in the language of analysis. Given $\epsilon>0$, and $r,s\in\Bbb R$, we write $r \approx_\epsilon s$ to denote   $|r-s|<\epsilon$.
With the above assumptions, the argument of Lemma~\ref{type-measure} is as follows:
\begin{align*}
\mu\otimes\lambda(\phi(x,y;b)) & = \int f_\mu^\phi d\lambda \approx_\epsilon  \int(\sum r_i\cdot\chi_{X_i})d\lambda=\sum r_i\cdot\lambda(X_i)\\
&   \approx_\epsilon \sum r_i\cdot\tilde{\lambda}(X_i) = \int(\sum r_i\cdot\chi_{X_i})d\tilde{\lambda}  \approx_\epsilon \int f_\mu^\phi d\tilde{\lambda} \\
&   = \mu\otimes\tilde{\lambda}(\phi(x,y;b)) = \tilde{\lambda}\otimes\mu(\phi(x,y;b)) = \int f_{\tilde{\lambda}}^{\phi^*} d\mu  \\
&  \approx_\epsilon   \int f_{\lambda}^{\phi^*} d\mu =   \lambda\otimes\mu(\phi(x,y;b)).
\end{align*}

\begin{Remark}
Assuming that $\mu$ and  $\lambda$ are dependent, and using Lemma~\ref{type-measure}, one can give  a generalization of  \cite[Lemma~3.1]{HPS}. That is, if  $\mu\in{\frak M}_x({\cal U})$ is    definable over $A$ and dependent, and $\lambda\in{\frak M}_y({\cal U})$ is   $A$-finitely satisfied and \textbf{\textit{dependent}},  then   $\mu_x\otimes\lambda_y=\lambda_y\otimes\mu_x$. (See also \cite[Proposition~7.22]{Simon}.) Nevertheless, we  prove something stronger (cf. Theorem~\ref{commute} below). In fact, the dependence of $\lambda$ is \textbf{\textit{unnecessary}}. 
\end{Remark}

To proceed, we require a lemma that is independently significant and demonstrates the continuity of a map similar to Proposition~6.3 in \cite{ChG}, albeit constrained to the narrower domain of finitely satisfiable measures. Notably, establishing this result necessitates recalling that average measures form a dense subset in this space and leveraging the regularity of the space.
\begin{Lemma} \label{convergence 2}
	Let $\mu\in{\frak M}_x({\cal U})$, $\lambda\in{\frak M}_y({\cal U})$ such that $\mu$ is dependent and $\lambda$ is finitely satisfiable in a small set $A$.
	Suppose that $(\bar a_i)_i$ is a net  in $(A^y)^{<\omega}$ such that $Av(\bar a_i)\to \lambda$, then: $$Av(\bar a_i)\otimes\mu\to \lambda\otimes\mu.$$
\end{Lemma}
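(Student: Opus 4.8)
The plan is to show that for every formula $\psi(x,y;c)\in L(\mathcal U)$ the net of reals $Av(\bar a_i)\otimes\mu(\psi) = \tfrac{1}{|\bar a_i|}\sum_{a\in\bar a_i}\mu(\psi(x,a;c))$ converges to $\lambda\otimes\mu(\psi)$. (Here I use $\mu$ on the right — note that since $\mu$ is dependent, $Av(\bar a_i)\otimes\mu$ and $\lambda\otimes\mu$ are both well-defined as Morley products of an $M$-finitely satisfiable measure with the dependent measure $\mu$, by Proposition~\ref{measurable} applied with the roles of the two measures as in Definition~\ref{Morley product}; I should state this at the outset so the expression $\lambda\otimes\mu$ makes sense.) Fix a model $N\supseteq Mc$ and let $f=f_{\mu,N}^{\psi^*}:S_{\psi}(N)\to[0,1]$ be the fiber function $q\mapsto\mu(\psi(x,d;c))$ for $d\models q$, which by Proposition~\ref{measurable} is $\lambda|_N$-measurable (and also $Av(\bar a_i)|_N$-measurable, trivially, as those are finitely supported). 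Then by Definition~\ref{Morley product}, $Av(\bar a_i)\otimes\mu(\psi) = \int f\, d\,Av(\bar a_i)|_N$ and $\lambda\otimes\mu(\psi)=\int f\, d\lambda|_N$, so the claim reduces to $\int f\, d\,Av(\bar a_i)|_N \to \int f\, d\lambda|_N$.

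The key point is that this is exactly convergence of integrals of a fixed bounded measurable function against a net of measures that converges to $\lambda|_N$ — but weak-* convergence alone is not enough for a merely measurable (non-continuous) $f$, so here is where dependence must be used a second time. First I would reduce $f$ to a continuous-like object via the Glivenko--Cantelli machinery: since $\mu$ is $M$-finitely satisfiable, $f$ lies in the pointwise closure of the convex hull of $\{\psi(a,y;c):S_{\psi}(N)\to\{0,1\}\mid a\in M\}$ by Fact~\ref{Dirac}(v), and since $\lambda$ is dependent (wait — $\lambda$ is not assumed dependent). So instead I would argue directly: the family $\mathcal F = \{\psi(a,y;c)\mid a\in M\}\subseteq C(S_\psi(N))$ need not be $\lambda|_N$-stable. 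Rethinking: the cleanest route is to fix $\epsilon>0$ and approximate. By regularity of $\lambda|_N$ and measurability of $f$, choose a $\lambda|_N$-simple function $g=\sum_{j=1}^k r_j\chi_{Y_j}$ with $\int|f-g|\,d\lambda|_N<\epsilon$ and each $Y_j$ clopen-approximable; then apply Corollary~\ref{cor 1} to $\mu$ (which \emph{is} dependent) to replace $\lambda|_N$ restricted to the relevant clopen algebra by an average of types from $S(\lambda)$, transferring the problem to a finite combination of honest $\psi$-conditions. Against each such clopen set and each instance the net $Av(\bar a_i)$ converges because $Av(\bar a_i)\to\lambda$ in the weak-* topology of ${\frak M}_y(\mathcal U)$ (convergence of measure of every definable set). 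Chaining the $\approx_\epsilon$-estimates — $\int f\,d\,Av(\bar a_i) \approx_\epsilon \int g\,d\,Av(\bar a_i) \to \int g\,d\lambda \approx_\epsilon \int f\,d\lambda$ — and letting $\epsilon\to0$ gives the result.

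The main obstacle is precisely the passage from weak-* convergence $Av(\bar a_i)\to\lambda$ (which controls only integrals of \emph{continuous} functions, i.e.\ characteristic functions of clopen/definable sets) to convergence of $\int f\,d\,Av(\bar a_i)$ for the possibly-discontinuous fiber function $f$. The uniformity needed to swap the limit in $i$ with the $\epsilon$-approximation of $f$ is not automatic, and this is where I expect to lean on the $\mu$-stability of $\{\psi(a,y;c)\mid a\in M\}$: by Fact~\ref{convergence in measure}, or more precisely the Glivenko--Cantelli characterization Fact~\ref{fundamental}, the convex hull of this family is $\mu$-stable only after one knows the relevant measure is complete and the family uniformly bounded — all of which hold here. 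Concretely I would use that $f$ is in the \emph{closed} convex hull (closure in $\int|\cdot|\,d\lambda|_N$, by Fact~\ref{convergence in measure}) of finite averages $\tfrac1m\sum\psi(a_\ell,y;c)$, pick such an average $h$ with $\int|f-h|\,d\lambda|_N<\epsilon$, and since $h$ is genuinely continuous on $S_\psi(N)$, weak-* convergence directly yields $\int h\,d\,Av(\bar a_i)\to\int h\,d\lambda$; a final bookkeeping step controls $\int|f-h|\,d\,Av(\bar a_i)$ uniformly in $i$ by pulling the $\mu$-stability estimate back through $N$ via Remark~\ref{remark-1}(iii). This is the delicate part and deserves to be written out carefully rather than asserted.
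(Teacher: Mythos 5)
There is a genuine gap, and it occurs at the very first step: you identify $\lambda\otimes\mu(\psi)$ with $\int f_{\mu}^{\psi}\,d\lambda|_N$, i.e.\ you integrate the fiber function of $\mu$ against $\lambda$. Under Definition~\ref{Morley product} that quantity is $\mu\otimes\lambda(\psi)$, not $\lambda\otimes\mu(\psi)$. The two products are in general different --- proving them equal is Theorem~\ref{commute}, for which this lemma is a stepping stone, and that theorem needs the extra hypothesis that $\mu$ is definable --- so your argument is aimed at the wrong limit, and your intermediate claim $\int f_\mu\,d\,Av(\bar a_i)\to\int f_\mu\,d\lambda$ is essentially the commutativity statement in disguise. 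Worse, $f_\mu^{\psi}$ need not even be $\lambda|_N$-measurable here: Proposition~\ref{measurable} gives measurability of the fiber function of an $M$-finitely satisfiable measure with respect to a \emph{dependent} one, and in this lemma the hypotheses are distributed the other way round ($\mu$ is the dependent measure, $\lambda$ the finitely satisfiable one); you half-notice this (``wait --- $\lambda$ is not assumed dependent'') but never repair the setup. Finally, the analytic core of your plan fails on its own terms: your approximations of $f$ by simple or continuous functions are approximations in $L^1(\lambda|_N)$, which give no control over $\int|f-h|\,d\,Av(\bar a_i)$, since the $Av(\bar a_i)$ are finite sums of point masses that may well be concentrated where $f$ and $h$ disagree. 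You flag this as the delicate step, but it is not merely delicate; it is the obstruction.

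The paper's proof avoids all of this by keeping the integrating measure fixed and varying the integrand. Writing $\lambda_i=Av(\bar a_i)$, one has $\lambda_i\otimes\mu(\psi)=\int f_{\lambda_i}^{\psi^*}\,d\mu$ and $\lambda\otimes\mu(\psi)=\int f_{\lambda}^{\psi^*}\,d\mu$, where these fiber functions live on the space of $x$-types. Each $f_{\lambda_i}^{\psi^*}$ is a finite average of instances $\psi(x,a;c)$ with $a\in M$, hence continuous and lying in the convex hull of $\{\psi(x,a;c):a\in M\}$; since $\mu$ is dependent, Fact~\ref{fundamental} makes this convex hull $\mu$-stable. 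The net $f_{\lambda_i}^{\psi^*}$ converges pointwise to $f_{\lambda}^{\psi^*}$ (this is exactly $Av(\bar a_i)\to\lambda$ read off instance by instance), and Fact~\ref{convergence in measure} upgrades pointwise convergence within a $\mu$-stable family to convergence in $L^1(\mu)$, giving $\int f_{\lambda_i}^{\psi^*}\,d\mu\to\int f_{\lambda}^{\psi^*}\,d\mu$. That is the step where dependence of $\mu$ is actually used, and it is the one your plan never reaches.
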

\begin{proof}
	Let $\phi(x,y)$ be a formula, and set $\lambda_i=Av(\bar a_i)$. By Fact~\ref{fundamental}, as $\mu$ is dependent, the convex hull of the set $\{f_{tp(a)}^{\phi^*}:a\in A\}$ is $\mu$-stable. (Notice that this convex hull contains each $f_{\lambda_i}^{\phi^*}$.) 
	Therefore,   $$\lim_i [\lambda_i\otimes\mu](\phi(x,y))=\lim_i\int f_{\lambda_i}^{\phi^*} d\mu \stackrel{(*)}{=} \int(\lim_i f_{\lambda_i}^{\phi^*})d\mu=\int f_\lambda^{\phi^*} d\mu=\lambda\otimes\mu.$$  The second equality $\stackrel{(*)}{=}$ holds by Fact~\ref{convergence in measure}. 
\end{proof}

The following is a generalization of \cite[Lemma~3.1]{HPS} and \cite[Proposition~5.1]{CGH}.

\begin{Theorem}  \label{commute}  
	Let $\mu_x\in{\frak M}({\cal U})$ be a    dependent measure and definable over $A$, and $\lambda_y\in{\frak M}({\cal U})$ be  $A$-finitely satisfied. Then   $\mu_x\otimes\lambda_y=\lambda_y\otimes\mu_x$.	
\end{Theorem}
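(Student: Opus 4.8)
The plan is to reduce the general statement to the case already handled in Lemma~\ref{type-measure}, using the approximation results of Section 4 together with the new Lemma~\ref{convergence 2}. Since $\lambda_y$ is $M$-finitely satisfied, by Fact~\ref{Dirac}(iv) it lies in the closed convex hull of Dirac measures $\delta_q = \delta_{q_y}$ with $q_y$ finitely satisfiable in $M$; equivalently, there is a net $(\bar a_i)$ in $(M^y)^{<\omega}$ with $\mathrm{Av}(\bar a_i)\to\lambda_y$. First I would fix a formula $\phi(x,y;b)$ with $b\in{\cal U}$ and aim to show $\mu_x\otimes\lambda_y(\phi)=\lambda_y\otimes\mu_x(\phi)$.

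The key steps, in order: (1) Observe that for each $i$, writing $\lambda_i = \mathrm{Av}(\bar a_i) = \frac1{m_i}\sum_{j} \delta_{q_{i,j}}$ with each $q_{i,j} = \mathrm{tp}(a_{i,j}/{\cal U})$ a global type finitely satisfiable in $M$, we have $\mu_x\otimes q_{i,j} = q_{i,j}\otimes \mu_x$. This is the heart of the matter and is exactly the type-versus-definable-measure commutation: since $\mu$ is definable over $M$ and $q_{i,j}$ is $M$-finitely satisfiable, one gets commutation by the standard argument (the Borel-definable side of Lemma~\ref{type-measure}, or \cite[Pro.~7.22]{Simon}). By linearity this gives $\mu_x\otimes\lambda_i = \lambda_i\otimes\mu_x$ for every $i$. (2) Take the limit over $i$. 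On the left, since $\mu$ is definable over $M$, the map $q\mapsto \mu(\phi(x,d;b))$ for $d\models q$ is continuous on the relevant type space, so $\mu_x\otimes\lambda_i(\phi) = \int f_\mu^{\phi}\,d\lambda_i \to \int f_\mu^\phi\,d\lambda = \mu_x\otimes\lambda_y(\phi)$ directly from $\lambda_i\to\lambda$ and continuity of $f_\mu^\phi$. (3) On the right, $\lambda_i\otimes\mu_x = \int f_{\lambda_i}^{\phi^*}\,d\mu$, and here the functions $f_{\lambda_i}^{\phi^*}$ need not be continuous in a way that makes the limit obvious — but this is precisely what Lemma~\ref{convergence 2} delivers: because $\mu$ is dependent, the convex hull of $\{f_{\mathrm{tp}(a)}^{\phi^*} : a\in M\}$ is $\mu$-stable, so pointwise convergence $f_{\lambda_i}^{\phi^*}\to f_\lambda^{\phi^*}$ upgrades (via Fact~\ref{convergence in measure}) to convergence in $L^1(\mu)$, whence $\lambda_i\otimes\mu_x(\phi)\to\lambda_y\otimes\mu_x(\phi)$. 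Combining (1)–(3) yields $\mu_x\otimes\lambda_y(\phi)=\lambda_y\otimes\mu_x(\phi)$, and since $\phi$ was arbitrary, the two Morley products agree.

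The main obstacle is step (3): controlling the limit of $\int f_{\lambda_i}^{\phi^*}\,d\mu$. Pointwise convergence of integrands generally does not pass through the integral, and a priori there is no domination available, so without the dependence hypothesis on $\mu$ this step could fail. The whole point of invoking Lemma~\ref{convergence 2} (equivalently, $\mu$-stability of the relevant convex hull together with Fact~\ref{convergence in measure}) is to replace bare pointwise convergence by convergence in measure, which on a probability space with uniformly bounded functions gives $L^1$-convergence. I should also take care that the Morley products $\lambda_i\otimes\mu$ and $\lambda\otimes\mu$ are all well-defined — this is fine since $\lambda_i$ is a type average and $\mu$ is dependent, and $\lambda$ is $M$-finitely satisfied with $\mu$ definable, so Proposition~\ref{measurable} and Definition~\ref{Morley product} apply throughout. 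A minor point worth checking is the independence of $f_\mu^\phi$ and $f_{\lambda_i}^{\phi^*}$ from the auxiliary model $N\supseteq Mb$, but this is already noted in Definition~\ref{Morley product}. Notably, the dependence of $\lambda$ plays no role here — only that of $\mu$ — which is why this is genuinely stronger than the remark preceding Lemma~\ref{convergence 2}.
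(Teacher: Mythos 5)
Your proposal follows essentially the same route as the paper: approximate $\lambda_y$ by averages $\lambda_i=\mathrm{Av}(\bar a_i)$ of types realized in $M$ (Fact~\ref{Dirac}(iv)), commute $\mu$ with each $\lambda_i$, and pass to the limit using definability of $\mu$ on the side $\mu\otimes\lambda_i$ and Lemma~\ref{convergence 2} (i.e.\ dependence of $\mu$) on the side $\lambda_i\otimes\mu$. Your identification of step (3) as the place where dependence is genuinely needed, and your observation that dependence of $\lambda$ plays no role, both match the paper.

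There is, however, one justification you should repair: in step (1) you derive $\mu\otimes q_{i,j}=q_{i,j}\otimes\mu$ from ``$\mu$ is definable over $M$ and $q_{i,j}$ is $M$-finitely satisfiable, by the standard argument (\cite[Pro.~7.22]{Simon}).'' That standard argument is a \emph{NIP} fact and is false in arbitrary theories: Example~\ref{non-example}(i) (citing \cite[Pro.~7.14]{CGH}) exhibits a global definable measure and a finitely satisfiable, definable type that do not commute, so finite satisfiability of the type is not enough here. The claim you need is nonetheless true, but for a different reason, which is the one the paper uses: each $q_{i,j}=\mathrm{tp}(a_{i,j}/{\cal U})$ with $a_{i,j}\in M$ is a \emph{realized} type, hence smooth over $M$, and a smooth (indeed realized) type commutes with any Borel-definable measure --- concretely, $\mu\otimes q_{i,j}(\phi(x,y))=\mu(\phi(x,a_{i,j}))=q_{i,j}\otimes\mu(\phi(x,y))$ by direct computation. (Appealing instead to ``the Borel-definable side of Lemma~\ref{type-measure}'' does not help: that lemma only reduces commutation with a measure to commutation with the types in its support; it does not establish the latter.) With step (1) justified via smoothness of the $\lambda_i$, the rest of your argument goes through and coincides with the paper's proof.
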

\begin{proof}	
Fix $\phi(x,y;z)\in L$ and $d\in\cal U$ with $|d|=|z|$. 
 By Fact~\ref{Dirac}(iv), there is $(\bar a_i)\in (A^y)^{<\omega}$  such that   $Av(\bar a_i)\to \lambda$. Set $\lambda_i=Av(\bar a_i)$.
	\begin{align*} 	\lambda\otimes\mu(\phi(x,y;d)) &  \stackrel{(1)}{=} [\lim_i (\lambda_i\otimes\mu)]\phi(x,y;d) \\
	&  \stackrel{(2)}{=}   [\lim_i (\mu\otimes \lambda_i)]\phi(x,y;d)  \\
	&  \stackrel{(3)}{=}  [ \mu\otimes (\lim_i\lambda_i)]\phi(x,y;d)   =   \mu\otimes \lambda(\phi(x,y;d)).
	\end{align*}
(2): As the $\lambda_i$'s are trivial, they commute with another measure. (This can be easily seen by looking at the definition of the Morley product.) 
	(1) and (3) follows from Lemma~\ref{convergence 2} and  definability of $\mu$, respectively.  (Cf. also \cite[Lemma~5.4]{CGH} for a proof of (3).)
\end{proof}

In the following we give  another 
  example of non-dependent measure.
\begin{Example}
 \label{non-example}
  In Proposition~7.14 of \cite{CGH}, it is shown that there is a complete theory $T$, a 	global definable  measure $\mu_x$ and a finitely satisfiable (and definable)  type $q_y$ such that $\mu\otimes q\neq q\otimes\mu$.
 Therefore, by Theorem~\ref{commute}, $\mu$ is {\bf not} dependent. Moreover, as $q$ is dependent, definable and finitely satisfiable, by  Theorem~\ref{commute}, $\mu$ is {\bf not}  finitely satisfiable. 
 However, in the discussion after Proposition~7.14 in \cite{CGH}, a direct explanation that $\mu$ is not finitely satisfiable is presented.
 
\noindent
Also, it is shown in \cite{CGH} that the measure $\lambda^*$ in Corollary~7.15 there
 has no global extensions.
Alternatively, for the same reason,  the measure $\lambda^*$ 
 is not dependent, and so by Theorem~\ref{fim is dependent}  below $\lambda^*$ has no  $fim$ global extension. A question arises: Does there exist a dependent measure that lacks any {\em smooth} extensions (including $fim$ or definable extensions)?
\end{Example}

\begin{Definition}
{\em Let $A$ be a small set and  $\mu\in{\frak M}({\cal U})$. 
	\newline (i) We say that  $\mu$ is  {\em dfs}  over $A$ if it is both definable over and finitely satisfiable in $A$.
	\newline (ii) We say that $\mu$ is $ddfs$ (over $A$) if it is both  $dfs$ (over $A$) and dependent.}
\end{Definition}

\begin{Corollary} \label{symmetry}
Let  $\mu\in{\frak M}({\cal U})$  be $ddfs$. Then $\mu$ is symmetric, that is, for any $n\in\Bbb N$ and any permutation $\sigma$ of $\{1,\ldots,n\}$, $\mu_{x_1}\otimes\cdots\otimes\mu_{x_n}=\mu_{\sigma x_1}\otimes\cdots\otimes\mu_{\sigma x_n}$. 
\end{Corollary}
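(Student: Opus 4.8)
The plan is to reduce the general symmetry statement to the single transposition case and then apply Theorem~\ref{commute}. First I would observe that the symmetric group on $\{1,\dots,n\}$ is generated by adjacent transpositions $(i\;\,i{+}1)$, so it suffices to prove that for any $n$ and any $i<n$,
\[
\mu_{x_1}\otimes\cdots\otimes\mu_{x_i}\otimes\mu_{x_{i+1}}\otimes\cdots\otimes\mu_{x_n}
=\mu_{x_1}\otimes\cdots\otimes\mu_{x_{i+1}}\otimes\mu_{x_i}\otimes\cdots\otimes\mu_{x_n}.
\]
Indeed, once each adjacent transposition is handled, an arbitrary permutation $\sigma$ is a composition of such moves, and each move is an equality of global measures, so the full identity follows by transitivity. (One should note here that Morley products of $dfs$ measures are associative, so the bracketing of the $n$-fold product is unambiguous; this is standard, e.g.\ via the iterated-integral description of $\otimes$ and Fubini, given the measurability provided by Proposition~\ref{measurable}.)

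Next, to prove the adjacent swap, I would fix a formula $\phi(x_1,\dots,x_n;d)$ and work "inside" the $i$-th and $(i{+}1)$-st coordinates. Concretely, let $\nu=\mu_{x_1}\otimes\cdots\otimes\mu_{x_{i-1}}$ act first and $\rho=\mu_{x_{i+2}}\otimes\cdots\otimes\mu_{x_n}$ act last; then the two sides differ only by replacing $\mu_{x_i}\otimes\mu_{x_{i+1}}$ by $\mu_{x_{i+1}}\otimes\mu_{x_i}$ in the middle. Since $\mu$ is $ddfs$ over $M$, in particular $\mu$ is dependent and definable over $M$ (taking the first factor as the "definable" one) and $\mu$ is $M$-finitely satisfied (taking the second factor as the finitely-satisfied one), Theorem~\ref{commute} gives $\mu_{x_i}\otimes\mu_{x_{i+1}}=\mu_{x_{i+1}}\otimes\mu_{x_i}$ as global measures in the pair of variables $(x_i,x_{i+1})$. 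Integrating this equality against $\nu$ on the left and $\rho$ on the right — which is legitimate because all the relevant functions $q\mapsto(\text{inner product value})$ are measurable by Proposition~\ref{measurable} and the measures in question are all $dfs$, hence all the iterated integrals make sense — yields the desired equality of the two $n$-fold products.

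The main obstacle I anticipate is not Theorem~\ref{commute} itself but the bookkeeping needed to legitimately "plug in" the two-variable identity into the middle of an $n$-fold product: one must check that the product $\mu_{x_i}\otimes\mu_{x_{i+1}}$, regarded as a single dependent (indeed $dfs$) measure in the compound variable $(x_i,x_{i+1})$, can be composed on either side with the other factors, i.e.\ that all the Morley products appearing are well-defined and associate correctly. For this I would invoke: (a) that a finite Morley product of $dfs$ measures is again $dfs$ (definability and finite satisfiability are each preserved under $\otimes$ of $dfs$ measures); (b) that such a product is again dependent — here one uses that $\mu_{x_i}\otimes\mu_{x_{i+1}}$ commutes with itself and with each $\mu_{x_j}$, together with the criterion of Fact~\ref{fundamental}, so that Proposition~\ref{measurable} applies with $\mu_{x_i}\otimes\mu_{x_{i+1}}$ in place of a single factor; and (c) associativity of $\otimes$ for $dfs$ measures via Fubini. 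Granting these essentially routine preservation facts, the corollary follows immediately by induction on the length of a reduced word for $\sigma$ in the adjacent transpositions.
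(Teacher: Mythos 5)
Your overall route is the same as the paper's: reduce to adjacent transpositions, apply Theorem~\ref{commute} to the two swapped factors (one copy of $\mu$ playing the definable-and-dependent role, the other the finitely-satisfied role), and use associativity of $\otimes$ to splice the two-variable identity into the $n$-fold product. The paper's proof is exactly ``Theorem~\ref{commute} plus associativity of $\otimes$ for definable measures,'' citing \cite[Thm~2.18]{CGH} for the latter.

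There is, however, one genuine problem in your list of supporting facts. Your item (b) asserts, as an ``essentially routine preservation fact,'' that the Morley product $\mu_{x_i}\otimes\mu_{x_{i+1}}$ of two dependent $dfs$ measures is again dependent. This is precisely Question (i) at the end of Section~5 of the paper --- whether the product of two $ddfs$ measures is $ddfs$ --- and the author explicitly conjectures the answer is \emph{negative}. Your proposed justification for (b) also does not work: self-commutation is a much weaker property than dependence (the introduction notes that condition (ii) of the Hrushovski--Pillay--Simon theorem ``need not have any special properties'' outside $NIP$), and neither commutation nor Fact~\ref{fundamental} lets you conclude that the product measure is dependent. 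Fortunately (b) is not needed at all. All the factors are \emph{definable} over $M$, hence Borel definable, so every iterated Morley product appearing in the argument is well-defined by the classical route (not via Proposition~\ref{measurable}, which is the finitely-satisfiable-against-dependent route), the product of definable measures is definable, and associativity holds for definable measures by \cite[Thm~2.18]{CGH}. Dependence is used only once, in the single application of Theorem~\ref{commute} to the two adjacent copies of $\mu$ itself, each of which is dependent by hypothesis. If you replace (b) by this observation, your argument is correct and coincides with the paper's.
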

\begin{proof}
	This follows from Theorem~\ref{commute} and associativity of  $\otimes$ for definable measures.  (See \cite[Proposition~2.6]{CG} for a proof of associativity of  $\otimes$ for definable measures. Notice that the iterated products are well-defined by definability of $\mu$, $\mu\otimes\mu$ and so on.) 
\end{proof}

\begin{Remark}
 \label{optimal}
	An obvious question is whether the argument of Theorem~\ref{commute} works with a weaker condition than dependence of measures. The answer is negative from one perspective: As measurability is necessary for the definition of $\mu\otimes\lambda$ (cf. Proposition~\ref{measurable} and Definition~\ref{Morley product}), such a condition must require measurability. On the other hand, there is a weaker notion of `dependent measure' which is equivalent to  measurability, namely $R$-stable (cf. the condition (a) in Theorem~9-4-2 in \cite{T84} and \cite[465S]{Fremlin4}).\footnote{When we talk about measurability, we mean the condition (a) in Theorem~9-4-2 in \cite{T84} or Fact~\ref{closure} above.}  The only difference is in the definition of  product of measures. Therefore, the above arguments work if and only if we use the notion $R$-stable (or $R$-dependent measure) instead of dependent measure.
Therefore, $\lambda$ is $R$-stable if and only if for any finitely satisfiable measure $\mu$, the product measure $\mu\otimes\lambda$ is well-defined. Moreover, in this case, the proof of Theorem~\ref{commute} works well.
In other words, this may convince the reader that this concept (i.e.  $R$-dependent measure) is, in a way, optimal for the purposes of this paper.
\end{Remark}




\section{Dependence and $fim$}
In this section, we study the relationship between the concepts of dependence and $fim$.
Recall from \cite{HPS} that: a global measure $\mu$ is $fim$ (over a small set $A$) if
(i) for every $\phi(x,y)\in L$, and $\epsilon>0$, for   sufficiently large $n$, there is an  $L(A)$-formula $\theta_\epsilon(x_1,\ldots,x_n)$ such that $\mu^{(n)}(\theta_\epsilon)\geq 1-\epsilon$, and (ii)  for all $b$,  $|\mu(\phi(x,b))-Av(a_1,\ldots,a_n; \phi(x,b))|\leq\epsilon$	for all  $(a_1,\ldots,a_n)\in \theta_\epsilon({\cal U})$.

 To begin, we first introduce a local notion of $NIP$ and present a result related to this concept (i.e. Proposition~\ref{NIP+fs}). Then, in Proposition~\ref{fim is dependent}, we examine the relationship between the two concepts dependence and $fim$.

  \begin{Definition}
Let $A$ be a small set and $\phi(x,y)$ a formula. We say that  $\phi(x,y)$ is {\em uniformly $NIP$ in  $A$} if there is a natural number $n=n_{\phi,A}$ such that there is {\bf no} $a_1,\ldots,a_n\in A$ such that for any $I\subseteq\{1,\ldots,n\}$, ${\cal U}\models \exists y\bigwedge_{i\in I}\phi(a_i,y)\wedge\bigwedge_{i\notin I}\neg\phi(a_i,y)$.\footnote{In the notion, `uniformly' emphasized that, in contrast to `$NIP$ in a model' in \cite{KP}, there is a natural number $n_{\phi,M}$ for any formula $\phi$.}

\noindent
We say that $A$ is {\em uniformly $NIP$} if every formula is uniformly $NIP$ in $A$.
  \end{Definition} 
\begin{Remark}
	Notice that if $A=M$ is a model, then `uniformly $NIP$ in $M$' is equivalent to $NIP$ for the theory.  (To see that this condition for a model implies $NIP$,
	note that for a fixed formula $\phi(x, y)$ and a fixed $n$, there is a sentence $\psi$
	that holds in $M$ if and only if $\phi(x, y)$ is uniformly $NIP$ with bound $n$. This
	means that if every formula is uniformly $NIP$ in $M$, then every formula is
	uniformly $NIP$ in every model of the theory, and therefore the theory is $NIP$.  Recall that theory $T$ is called $NIP$ if every formula has $NIP$ for $T$.) Of course, if $A$ is not a model, this condition is strictly weaker than  $NIP$ for the theory. 
\end{Remark}


\begin{Proposition} \label{NIP+fs}
Let $A$ be a uniformly $NIP$ set and $\mu$ a global measure which is finitely satisfiable in $A$. Suppose that for each $k$, the Morley product $\mu^{(k)}$ is well-defined.Then $\mu$ is dependent.
\end{Proposition}
\begin{proof}
Suppose for a contradiction that $\mu$ is not dependent. Then, there are a measurable set $E$ with $\mu(E)>0$, and formula $\phi(x,y)$ such that for each $k$, $\mu^k(D_k({\cal U},{\cal U},E,\phi))=(\mu(E))^k>0$. (Cf. Definition~\ref{dependent measure}.) This means that for each $k$, $$\mu^k\big(E^k\cap\{\bar p:\forall I\subseteq k\exists b_I\in{\cal U}\bigwedge_{i\in I}\phi(p_i,b_I)=0\wedge\bigwedge_{i\notin}\phi(p_i,b_I)=1\}\big)>0. \ \ (*)$$
On the other hand, as $\mu$ is finitely satisfiable in $A$, the Morley product $\mu^{(k)}$ is so. Consider the following formula: $$\Phi(x_1,\ldots,x_k):=\forall  I\subseteq k\exists y_I\big(\bigwedge_{i\in  I}\neg\phi(x_i,y_I)\wedge\bigwedge_{i\notin  I}\phi(x_i,y_I)\big).$$
Notice that, by $(*)$, $\mu^{(k)}(\Phi)\geq\mu^k(D_k({\cal U},{\cal U}, E, \phi))>0$. (See also Proposition~3.3  in \cite{GH} or the explanation in Example \ref{non-example 2} above.) As $\mu^{(k)}(\Phi)>0$ and $\mu^{(k)}$ is finitely satisfiable in $A$, there are $a_1,\ldots,a_k\in A$ such that $\models\Phi(a_1,\ldots,a_k)$. As $k$ is arbitrary and $A$ is uniformly $NIP$, this is a contradiction.
\end{proof}

\medskip
 An  obvious  question is whether each $fim$ measure is dependent. A positive answer indicates that the notion `dependent measure' is necessary.  
\begin{Proposition} \label{fim is dependent}
  Every $fim$ measure is dependent.
\end{Proposition}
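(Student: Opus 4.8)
The plan is to show that the $fim$ property forces the set of instances $\{\phi(x,b) : b \in \mathcal U\}$, viewed as $\{0,1\}$-valued functions on $S_{\phi^*}(\mathcal U)$, to satisfy the Glivenko--Cantelli condition of Fact~\ref{fundamental}(iii) with respect to $\mu$, which by the equivalence (i)$\iff$(iii) is exactly $\mu$-stability, i.e. dependence. So first I would unwind the definitions: fix a formula $\phi(x,y)$ and $A \subseteq B \subseteq \mathcal U$ (the monster plays the role of the model $M$ in Definition~\ref{dependent measure}, or one works over an arbitrary small model and passes to restrictions as in Remark~\ref{remark-1}(iii),(iv)); by Remark~\ref{remark-1}(iv) it suffices to work with $\mu$ itself on $S_x(\mathcal U)$ rather than with $\mu_{\phi,B}$. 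Let $\mu$ be $fim$ over $M$. For each $\epsilon>0$ take the $L(M)$-formula $\theta_\epsilon(x_1,\dots,x_n)$ with $\mu^{(n)}(\theta_\epsilon)\geq 1-\epsilon$ and $|\mu(\phi(x,b)) - \mathrm{Av}(a_1,\dots,a_n;\phi(x,b))| \leq \epsilon$ for all $(a_1,\dots,a_n)\in\theta_\epsilon(\mathcal U)$ and all $b$.

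Next I would upgrade this single-$\epsilon$ estimate to an almost-everywhere statement about $\mu^{\mathbb N}$. The standard move (this is how $fim$ is used in \cite{HPS}, and it underlies Fact~\ref{fundamental}(iii) here) is: for each $m$, applying the $fim$ condition with $\epsilon = 1/m$ gives a formula $\theta_{1/m}$ of some arity $n_m$ with $\mu^{(n_m)}(\theta_{1/m}) \geq 1 - 1/m$. One then argues, using Fubini for the product measure $\mu^{\mathbb N}$ and a Borel--Cantelli argument, that for $\mu^{\mathbb N}$-almost every $w = (w_1,w_2,\dots) \in S_x(\mathcal U)^{\mathbb N}$ one has $\sup_b |\frac1k\sum_{i=1}^k \phi(w_i,b) - \mu(\phi(x,b))| \to 0$ as $k\to\infty$. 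The point is that the initial segments of a $\mu^{\mathbb N}$-random sequence land in $\theta_{1/m}(\mathcal U)$ with probability $\to 1$ uniformly, so the uniform (in $b$) closeness of the empirical average to $\mu(\phi(x,b))$ holds along almost every sequence. This is precisely condition (iii) of Fact~\ref{fundamental} for the uniformly bounded family $A = \{\phi(x,b) : b\in\mathcal U\} \subseteq C(S_{\phi^*}(\mathcal U))$. By Fact~\ref{fundamental}, (iii)$\implies$(i), so $A$ is $\mu$-stable, which by Remark~\ref{remark-1}(i) and Definition~\ref{dependent measure} means exactly that $\mu$ is dependent (for this $\phi$; since $\phi$ was arbitrary, and the argument localizes to any $A\subseteq B$ via restriction as in the proof of Proposition~\ref{convex-1}, $\mu$ is dependent).

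The main obstacle is the passage from the ``one $\epsilon$ at a time'' $fim$ condition to the genuine almost-sure uniform convergence along $\mu^{\mathbb N}$-random sequences, i.e. carefully justifying the Borel--Cantelli / Fubini step and making sure the null set can be taken uniform over $\epsilon$. One must check that the event ``$(w_1,\dots,w_{n_m}) \in \theta_{1/m}(\mathcal U)$'' has $\mu^{n_m}$-measure $\geq 1-1/m$ as a genuine measurable event (which it is, since $\theta_{1/m}$ is a definable set, hence clopen in the relevant type space), apply Borel--Cantelli to get that for a.e.\ $w$ all but finitely many such events occur, and then note that on those events the empirical average over the first $n_m$ coordinates is within $1/m$ of $\mu(\phi(x,b))$ uniformly in $b$; a small additional argument handles arbitrary $k$ (not just $k = n_m$) by monotonicity/subadditivity or by a standard blocking trick. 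Alternatively, and perhaps more cleanly, one can invoke directly that $fim$ implies $fam$ (finitely approximated), and that finite approximability plus a diagonalization gives the $\mu^{\mathbb N}$-a.e.\ statement; either route lands at Fact~\ref{fundamental}(iii). I would write the argument via the latter, citing \cite{HPS} or \cite{CGH} for the implication $fim \Rightarrow fam$ and for the measure-theoretic packaging, keeping the proof short.
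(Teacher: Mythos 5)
Your proposal is essentially the paper's proof: both verify the Glivenko--Cantelli criterion of Fact~\ref{fundamental}(iii) for the family $\{\phi(x,b):b\in\mathcal{U}\}$ using the $fim$ witnesses $\theta_\epsilon$, and then conclude $\mu$-stability (i.e.\ dependence) from the equivalence (i)$\iff$(iii). The paper is in fact softer than you at the final step --- it only records the convergence-in-outer-probability statement $(\mu^n)^*\big\{\bar p:\sup_b|\mu(\phi(x,b))-\frac{1}{n}\sum_1^n\phi(p_i,b)|\le\epsilon\big\}\to 1$ and lets Fact~\ref{fundamental} absorb the upgrade to an almost-sure statement, so if you insist on running the Borel--Cantelli argument yourself you should replace $\epsilon_m=1/m$ (whose associated series diverges) by a summable choice such as $\epsilon_m=2^{-m}$.
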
 
\begin{proof}
	 	Let $\mu_x\in{\frak M}({\cal U})$ be $fim$ over a small set/model $A$. For any formula $\phi(x,y)$, there are formulas  $\theta_{\epsilon_n}(x_1,\ldots,x_n)\in L(A)$ such that $\mu^{(n)}(\theta_{\epsilon_n})\to 1$ as ${\epsilon_n}\to 0$, and  for all $b$,  $|\mu(\phi(x,b))-\frac{1}{n}\sum_1^n\phi(a_i,b)|\leq {\epsilon_n}$	for all  $(a_1,\ldots,a_n)\in \theta_{\epsilon_n}({\cal U})$.
Set $X_n:=\{(p_1,\ldots,p_n)\in (S_x({\cal U}))^n: p|_{x_i}=p_i (i\leq n) \text{ for some }  p\in S_{x_1,\ldots,x_n}({\cal U}) \text{ such that }\theta_{\epsilon_n}\in p\}$.

Similar to the argument of Example \ref{non-example 2}, by Proposition 3.3 of \cite{GH}, the restriction map $r:S_{x_1,\ldots,x_n}({\cal U})\to S_{x_1}({\cal U})^n$ via $p\mapsto (p_1,\ldots,p_n)$ where 
$p|_{x_i}=p_i$   ($i\leq n$), is a quotient map, and the {\em pushforward} of $\mu^{(n)}$ is $\mu^n$. 
 Therefore, $\mu^n(X_n)=
 \mu^{(n)}(r^{-1}(X_n))\geq \mu^{(n)}(\theta_{\epsilon_n})$.
 Since $\mu^{(n)}(\theta_{\epsilon_n})\to 1$ as ${\epsilon_n}\to 0$, $\mu^n(X_n)\to 1$.

 Set $Y_n=\{(p_1,\ldots,p_n)\in (S_{x_1}({\cal U}))^n \ : \ \sup_{b\in{\cal U}}\big|\mu(\phi(x,b))-\frac{1}{n}\sum_1^n\phi(p_i,b)\big|\leq\epsilon_n\}$. Clearly $X_n\subseteq Y_n$. Since $\mu^n(X_n)\to 1$ as $n\to\infty$, $\mu^n(Y_n)\to 1$.

 This is enough, by the equivalence (i)~$\iff$~(iii) of  Fact~\ref{fundamental}. (Recall also Definition~\ref{dependent measure} and Remark~\ref{remark-1}(i).)
\end{proof}

From one perspective, the following result  complements  Theorem~5.16 of \cite{CGH}. It  uses Proposition~\ref{fim is dependent} to ensure that  products of $fim$ and finitely satisfied measures make  sense.
\begin{Theorem} \label{CGH-like}
	Let $\mu_x\in{\frak M}({\cal U})$ be $fim$ (over $A$), and $\lambda_y$ be $A$-finitely satisfied. Then $\mu$ commutes with $\lambda$. 
\end{Theorem}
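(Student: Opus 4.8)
The plan is to read this off from Theorem~\ref{commute} once we know that a $fim$ measure is simultaneously dependent and definable. First I would recall that, in an arbitrary theory, $fim$ over $M$ entails $fam$ over $M$ which entails $dfs$ over $M$ (see \cite{CGH}): instantiating the witnessing formulas $\theta_\epsilon$ in $M$ yields finite tuples $\bar a\in M$ with $|\mu(\phi(x,b))-\mathrm{Av}(\bar a;\phi(x,b))|\le\epsilon$ for every $b$, whence $\mu$ is finitely satisfiable in $M$, and the function $b\mapsto\mu(\phi(x,b))$, being a uniform limit of the definable functions $b\mapsto\mathrm{Av}(\bar a;\phi(x,b))$, is definable over $M$. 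In particular our $\mu$ is definable over $M$. Next, by Proposition~\ref{fim is dependent}, $\mu$ is dependent.

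With these two facts the statement essentially has content only in so far as the two Morley products exist. Here $\mu_x\otimes\lambda_y$ is legitimate because $\mu$ is definable over $M$, so the integrand $f^\phi_\mu$ of Definition~\ref{Morley product} is continuous, a fortiori $\lambda|_N$-measurable; and $\lambda_y\otimes\mu_x$ is legitimate because $\lambda$ is $M$-finitely satisfied while $\mu$ is dependent, by Proposition~\ref{measurable} together with Definition~\ref{Morley product}. This is exactly the role of Proposition~\ref{fim is dependent} flagged in the remark preceding the theorem: it is what makes $\lambda_y\otimes\mu_x$ meaningful.

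Finally, $\mu$ is dependent and definable over $M$, and $\lambda$ is $M$-finitely satisfied, so the hypotheses of Theorem~\ref{commute} are met verbatim and we conclude $\mu_x\otimes\lambda_y=\lambda_y\otimes\mu_x$, i.e.\ $\mu$ commutes with $\lambda$. I do not expect a genuine obstacle: the substantive work sits in Proposition~\ref{fim is dependent} and Theorem~\ref{commute}, of which this theorem is essentially a corollary. The only points that warrant a moment's care are (a) confirming that $fim\Rightarrow$ definable in the absence of $NIP$, so that Theorem~\ref{commute} applies, and (b) checking that in the $NIP$ setting this recovers \cite[Thm.~5.16]{CGH}, where generically stable measures are $fim$ by the Hrushovski--Pillay--Simon theorem.

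If one preferred a proof not citing Theorem~\ref{commute} as a black box, one would simply repeat its argument in the present instance: choose a net $(\bar a_i)$ in $(M^y)^{<\omega}$ with $\mathrm{Av}(\bar a_i)\to\lambda$ (Fact~\ref{Dirac}(iv)), set $\lambda_i=\mathrm{Av}(\bar a_i)$, use that each $\lambda_i$ is smooth over $M$ to get $\lambda_i\otimes\mu=\mu\otimes\lambda_i$, and pass to the limit via Lemma~\ref{convergence 2} (which uses dependence of $\mu$, supplied by Proposition~\ref{fim is dependent}) on the $\lambda_i\otimes\mu$ side and via definability of $\mu$ on the $\mu\otimes\lambda_i$ side. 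Conceptually this is the same proof, only shorter in appearance.
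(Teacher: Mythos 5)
Your proposal is correct and follows exactly the paper's route: the paper's proof is precisely the observation that the statement follows from Theorem~\ref{commute} together with Proposition~\ref{fim is dependent}, with the standard fact that $fim$ implies $dfs$ (hence definable over $M$) left implicit. Your additional verification that $fim\Rightarrow dfs$ and that both Morley products are well-defined is a sound filling-in of details the paper takes for granted.
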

\begin{proof} This follows from Theorem~\ref{commute} and Proposition~\ref{fim is dependent}.
\end{proof}

One can give a proof similar to \cite[Proposition~5.15]{CGH}. The point here is that, as $\lambda$ is $A$-finitely satisfied and $\mu$ is dependent (by Proposition~\ref{fim is dependent}), every fiber function $f_{\lambda,N}^{\phi^*}$  is $\mu|_N$-measurable. (Cf. Definition~\ref{Morley product}.) We just have to check everything still works well.

\subsection*{Concluding remarks/questions}

Two key properties have allowed the generalization of model-theoretic results presented in this article. If $\lambda \in \mathfrak{M}(\mathcal{U})$ is dependent, then:
\begin{itemize}
\item[(1)] For any set $A$, the restriction $\lambda|_A$ can be approximated by types within its support (see Corollary~\ref{cor 1}).
\item[(2)] The Morley product $\mu\otimes\lambda$ is well-defined for any global measure $\mu$ that is finitely satisfiable in some small set (refer to Proposition~\ref{measurable}).
\end{itemize}

Regarding whether conditions like (1) and (2) are suitable for generalizing ``dependence" beyond the NIP framework, we offer the following observations:
First, the concept of "dependence" aligns optimally with (1) and (2), as these conditions imply dependence.\footnote{Note that, with the explanations provided in Remark \ref{optimal}, if we wish to be more precise, all the results of this paper also hold for the weaker notion of $R$-stability. However, since the concept of $\mu$-stability has been emphasized more prominently in the literature, particularly in the book by Fremlin, we have focused on this latter notion.
In summary, $\mu$-stability is equivalent to (1)+(2), while $R$-stability is equivalent to (2). In the case of R-stability, (2) implies (1).
(What we mean by (2) is something similar to the usage in Proposition \ref{measurable} and Definition \ref{Morley product}, and
what we mean by condition (1) is something similar to Fact \ref{fundamental}(iii), or Theorem \ref{almost fim} and Corollary \ref{cor 1}.)
Note that the recent points do not contradict the fact that $\mu$-stability is stronger than $R$-stability, as the definition of the product measure differs in these two cases.}
Second, almost all established model-theoretic results in the NIP setting can be generalized to arbitrary theories using (1) and (2). However, certain results, such as Theorem~\ref{commute}, Lemma~\ref{convergence 2}, and even Theorem~\ref{almost fim}, are novel and cannot be easily derived through this approach.
Finally, and most significantly, exploring connections between distinct areas, such as model theory and measure theory, is invaluable. Mathematics as a discipline thrives on such cross-disciplinary links rather than limiting itself to introspective analysis. 
\newline
On the other hand, as the notion of $\mu$-stable is defined for real-valued functions, all the results of this article can be easily generalized to continuous logic \cite{BBHU}.

\medskip
At the end paper let us ask the following questions:
\begin{Question}
	(i) Is the product of two global $ddfs$ measures   $ddfs$?
	\newline
	(ii) Is every $dfs$ measure dependent? If so, by Theorem~\ref{commute}, the answer to Question~5.10 of \cite{CGH} is positive (i.e., any two $dfs$  global measures commute.)   As the product of two $dfs$ measures is $dfs$, a positive answer to (ii) also automatically gives a positive answer to (i).
\end{Question}
We strongly believe that the answer to (i) is negative. Indeed, suppose that $f:S_\phi({\cal U})\times S_{\phi^*}({\cal U})\to [0,1]$ is a function and for all $p\in S_\phi({\cal U})$ and 
$q\in S_{\phi^*}({\cal U})$, $x$-sections $f_p:S_{\phi^*}({\cal U})\to[0,1]$ and $y$-sections $f^q:S_{\phi}({\cal U})\to[0,1]$ are measurable. There is no guarantee that $f$ will be measurable.
A similar idea may  lead to the rejection of a claim in the initial version of \cite{CGH} that $fim$ measures are closed under Morley product, i.e. the products of  $fim$ measures are $fim$.\footnote{We clarify that the published version of \cite{CGH} does not claim that the product of $fim$ measures is $fim$. } 
  Finally, we believe that the answer to Question~5.10 of \cite{CGH} is negative, however, we should wait for such counterexamples in future work.  In $NIP$ theories, the answer to these questions is clearly positive. So we can make the questions more accurate.
\begin{Question}
	(i) In which theories is the product of two global $ddfs$ measures    $ddfs$? 
	(ii) In which theories is  every $dfs$ measure  dependent? 
\end{Question}


\begin{Question} Is the product of dependent measures (assuming the product is well-defined) dependent?
\end{Question}

\bigskip\noindent
{\bf Acknowledgements.}
 I want to thank Gabriel Conant
for his interest in reading a preliminary version of this article and for his comments. I thank the anonymous referee for his details suggestions and corrections; they helped to improve significantly the exposition if this paper.

I would like to thank the Institute for Basic Sciences (IPM), Tehran, Iran. Research partially supported by IPM grant 1402030028.

\end{document}